\documentclass[9pt, journal]{IEEEtran}
\usepackage{mathtools}
\usepackage{bm}
\usepackage{color}
\usepackage{amsmath, amsthm,amssymb,latexsym}
\newtheorem{theorem}{Theorem}
\newtheorem{lemma}[theorem]{Lemma}
\newtheorem{assumption}[theorem]{Assumption}

\newtheorem{remark}[theorem]{Remark}
\newtheorem{corollary}[theorem]{Corollary}

\usepackage{pgfplots}
\pgfplotsset{compat=newest}
\usetikzlibrary{plotmarks}
\usepgfplotslibrary{patchplots}
\usepackage{grffile}
\usepackage{amsmath}
\allowdisplaybreaks
\usepackage{tikz}
\usetikzlibrary{graphs} \ifnum\pdfshellescape=1
\tikzexternalize[prefix=tikzpdf/] \fi

\usetikzlibrary{positioning}
\usetikzlibrary{graphs}

\newcommand{\E}[1]{\mathbb{E}\left\{ #1 \right\} }

\begin{document}

\title{Distributed Consensus over Markovian Packet Loss Channels}

 \author{Liang Xu, Yilin Mo, Lihua Xie 
   \thanks{Liang Xu, Yilin Mo and Lihua Xie are with the School of
     Electrical and Electronic Engineering, Nanyang Technological University,
     Singapore 639798, Singapore. Email:{\tt\small lxu006@e.ntu.edu.sg, \{ylmo,
       elhxie\}@ntu.edu.sg} }}

\date{\today}

\maketitle

\begin{abstract}
This paper studies the consensusability problem of multi-agent systems (MASs), where agents communicate with each other through Markovian packet loss channels.  We try to determine conditions under which there exists a linear distributed consensus controller such that the MAS can achieve mean square consensus. We first provide a necessary and sufficient consensus condition for MASs with single input and i.i.d.\ channel losses, which complements existing results. Then we proceed to study the case with identical Markovian packet losses. A necessary and sufficient consensus condition is firstly derived based on the stability of Markov jump linear systems. Then a numerically verifiable consensus criterion in terms of the feasibility of linear matrix inequalities (LMIs) is proposed. Furthermore, analytic sufficient conditions and necessary conditions for mean square consensusability are provided for general MASs. The case with nonidentical packet loss is studied subsequently. The necessary and sufficient consensus condition and a sufficient consensus condition in terms of LMIs are proposed. In the end, numerical simulations are conducted to verify the derived results.
\end{abstract}

\section{Introduction}

The rapid development of technology has enabled wide applications of multi-agent systems (MASs). The consensus problem, which requires all agents to agree on certain quantity of common interests, builds the foundation of other cooperative tasks. One question arises before control synthesis: whether there exist distributed controllers such that the MAS can achieve consensus. This problem is referred to as consensusability of MASs. Previously, the consensusability problem with perfect communication channels has been well studied under an undirected/directed communication topology~\cite{MaCuiqin2010TAC, YouKeyou2011TACconsensusability, GuGuoxiang2012TAC, LiZhongkui2010TCS, TrentelmanHL2013TAC}. In~\cite{MaCuiqin2010TAC}, it is shown that to ensure the consensus of a continuous-time linear MAS, the linear agent dynamics should be stabilizable and detectable, and the undirected communication topology should be connected. Furthermore, references~\cite{YouKeyou2011TACconsensusability,GuGuoxiang2012TAC} show that for a discrete-time linear MAS, the product of the unstable eigenvalues of the agent system matrix should additionally be upper bounded by a function of the eigen-ratio of the undirected graph. Extensions to directed graphs and robust consensus can be found in~\cite{LiZhongkui2010TCS,TrentelmanHL2013TAC}.

Most of the consensusability results discussed above are derived under perfect communications assumptions. However, this is not the case in practical applications, where communication channels naturally suffer from limited data rate constraints, signal-to-noise ratio constraints, time-delay and so on. Therefore, the consensusability problem of MASs under communication channel constraints has been widely studied in~\cite{LiuShuai2011SIAM, QiuZhirong2017TAC, LiZhongkui2017TAC, XuLiang2016Automatica,  QiTian2016TAC, WangZhenhua2017ScienceChina} under different channel models. In this paper, we are interested in the lossy channel~\cite{SinopoliBruno2004TAC,MoYilin2012TAC,YouKeyou2011TACmarkovian}, which models the packet drop phenomenon in wireless communications due to the communication noise, interference or congestion. Previously, the case with independent and identically distributed (i.i.d.) channel fading has been studied in~\cite{XuLiang2016Automatica}. However, the i.i.d.\ assumption fails to capture the correlation of channel conditions over time. Since Markov models are simple and effective in capturing temporal correlations of channel conditions~\cite{GoldsmithA2005Book,Huang2007Automatica}, we are interested in the consensusability problem of MASs over Markovian packet loss channels. Due to the existence of correlations of packet losses over time, the methods used to deal with the i.i.d.\ channel state  in~\cite{XuLiang2016Automatica} cannot be applied directly to the Markovian channel loss case. 

This paper studies the consensusability problem of MASs over packet loss channels. The contributions are listed as follows. A necessary and sufficient consensus condition for MASs with single input and identical i.i.d.\ channel losses is firstly derived, which complements existing results and explicitly demonstrates how the network topology, the agent dynamics and the packet loss interplay with each other in the consensus problem. For the consensus with identical Markovian packet loss: 1, a necessary and sufficient consensusability condition is provided; 2, a numerically testable criterion and analytical sufficient and necessary consensusability conditions are derived; 3, a critical consensusability condition is obtained for the special case of scalar agent dynamics. For the consensus with nonidentical Markovian packet loss, sufficient and necessary consensus conditions are also derived by introducing the edge Laplacian.

Some preliminaries results on distributed consensus over identical Markovian packet loss channels are contained in~\cite{XuLiangNecSys2018}. This paper contains new results for the case of MASs with single input and identical i.i.d.\ packet losses, and for distributed consensus with nonidentical Markovian packet loss. This paper is organized as follows: The problem formulation is stated in Section~\ref{sec.ProblemFormulation}. The consensusability results for MASs with single input and identical i.i.d.\ channel losses are presented in Section~\ref{sec.SingleInputIIDLoss}. The consensusability results for the cases with identical Markovian and nonidentical Markovian packet losses are discussed in Section~\ref{sec.IdenticalMarkovLoss} and Section~\ref{sec.NonidenticalMarkovLoss}, respectively. Numerical simulations are provided in Section~\ref{sec.Simulations}. This paper ends with some concluding remarks in Section~\ref{sec.Conclusion}. 

\textit{Notation}: All matrices and vectors are assumed to be of appropriate dimensions that are clear from the context. $\mathbb{R}, \mathbb{R}^{n}, \mathbb{R}^{m\times n}$ represent the sets of real scalars, $n$-dimensional real column vectors and $m\times n$-dimensional real matrices, respectively. $\bm{1}$ denotes a column vector of ones. $I$ represents the identity matrix. $A'$, $A^{-1}$, $\rho(A)$ and $\mathrm{det}(A)$ are the transpose, the inverse, the spectral radius and the determinant of matrix $A$, respectively. $\otimes$ represents the Kronecker product. For a symmetric matrix $A$, $A \ge 0$ ($A>0$) means that matrix $A$ is positive semi-definite (definite). For a symmetric matrix $A$, $\lambda_{\mathrm{min}}(A)$ denotes the smallest eigenvalue of $A$. $\mathrm{diag}(A, B)$ denotes a diagonal matrix with diagonal entries $A$ and $B$.  $\mathbb{E}\{\cdot\}$ denotes the expectation operator. The symmetric matrix $ \begin{bsmallmatrix} A & C'\\ C &B \end{bsmallmatrix} $ is abbreviated as  $ \begin{bsmallmatrix} A & * \\ C &B \end{bsmallmatrix} $.

\section{Problem Formulation \label{sec.ProblemFormulation}}

Let $\mathcal{V}=\{1, 2, \ldots, N\}$ be the set of $N$ agents with $i\in \mathcal{V}$ representing the $i$-th agent. A graph $\mathcal{G}=(\mathcal{V} ,\mathcal{E})$ is used to describe the interaction among agents, where $\mathcal{E} \subseteq \mathcal{V}\times \mathcal{V} $ denotes the edge set with paired agents. We assume $\mathcal{G}$ is undirected throughout this paper. An edge $(j,i) \in \mathcal{E}$ means that the $i$-th agent and the $j$-th agent can communicate each other. The neighborhood set $\mathcal{N}_i$ of agent $i$ is defined as $\mathcal{N}_i=\{j\;|(j,i)\in\mathcal{E}\}$. The graph Laplacian matrix $\mathcal{L}={[\mathcal{L}_{ij}]}_{N\times N}$ is defined as $\mathcal{L}_{ii}=\sum_{j\in \mathcal{N}_i}a_{ij}$, $\mathcal{L}_{ij}=-a_{ij}$ for $i\neq j$, where $a_{ii}=0$, $a_{ij}=1$ if $(j,i)\in \mathcal{E}$ and $a_{ij}=0$, otherwise. A path on $\mathcal{G}$ from agent $i_1$ to agent $i_l$ is a sequence of ordered edges in the form of $(i_{k},i_{k+1})\in \mathcal{E}$, $k=1,2,\ldots, l-1$. A graph is connected if there is a path between every pair of distinct nodes.

In this paper, we assume that each agent has the homogeneous dynamics
\begin{align}
  \label{eq:AgentDynamics} x_{i}(t+1)=Ax_{i}(t)+Bu_i(t), \quad i=1, \ldots, N,
\end{align}
where $x_i\in \mathbb{R}^n$ is the system state; $u_i \in \mathbb{R}^m$ is the control input; $(A, B)$ is controllable and $B$ has full-column rank. The interaction among agents is characterized by an undirected connected graph $\mathcal{G}=\{\mathcal{V, \mathcal{E}}\}$. The consensus protocol is given by
\begin{align}
  \label{eq:ConsensusProtocol}
  u_i(t)=\sum_{j\in \mathcal{N}_i} \gamma_{ij}(t) K(x_i(t)-x_j(t)),  
\end{align}
where $\gamma_{ij}(t)\in \{0, 1\}$ models the lossy effect of the communication channel from agent $j$ to agent $i$, which satisfies that $\gamma_{ij}(t)=1$ when the transmission is successful at time $t$, and $0$ otherwise. 

Throughout the paper, we say that the MAS~\eqref{eq:AgentDynamics} is mean square consensusable by the protocol~\eqref{eq:ConsensusProtocol} if there exists $K$ such that the MAS~\eqref{eq:AgentDynamics} can achieve mean square consensus under the protocol \eqref{eq:ConsensusProtocol}, i.e., $\lim_{t \rightarrow \infty} \E{\|x_i(t)-x_j(t)\|^2}=0$ for all $i, j\in \mathcal{V}$. The following assumption is made as in~\cite{YouKeyou2011TACconsensusability}.
\begin{assumption}\label{assumption:matrixAisUnstable}
    All the eigenvalues of $A$ are either on or outside the unit disk.
\end{assumption}

\section{Single Input Agent Dynamics with I.I.D. Packet Loss \label{sec.SingleInputIIDLoss}}

In this section, we consider the special case that the agent is with single input, i.e., $u_i\in \mathbb{R}$ and the packet loss processes for all channels are identical and i.i.d., which has been studied in~\cite{XuLiang2016Automatica}. We provide a necessary and sufficient condition to guarantee the mean square consensus, which complements existing results in~\cite{XuLiang2016Automatica}, where only the sufficiency is proved. Specifically, we make the following assumption about the packet loss process. 

\begin{assumption}\label{assumption:iidPacketLoss}
$\gamma_{ij}(t)=\gamma(t)$ for all $(i, j)\in \mathcal{E}$ and  $t\ge 0$. Moreover, the sequence $\{ \gamma(t) \}_{t\ge 0} $ is i.i.d.\ and $\gamma(t)$ has a Bernoulli distribution with lossy rate $p$.
\end{assumption}

Define the consensus error as $\delta(t)= (I-\frac{1}{N}\bm{1}\bm{1}')x(t)$,
where $x(t)=[x_1(t)', \ldots, x_N(t)']'$. Following similar derivations as in~\cite{XuLiang2016Automatica}, the consensus error dynamics is given by
\begin{align}
  \label{eq:ConsensusErrorDynamics}
 \delta(t+1)=(I\otimes A + \gamma(t)\mathcal{L}\otimes BK)\delta(t), 
\end{align}
where $\mathcal{L}$ is the graph Laplacian of $\mathcal{G}$. If there exists $K$ such that system~\eqref{eq:ConsensusErrorDynamics} is mean square stable, i.e., $ \lim_{t\rightarrow \infty} \E{ \delta(t)\delta(t)' }=0$, the MAS can achieve mean square consensus. It has been proved in~\cite{XuLiang2016Automatica} that the mean square stability of~\eqref{eq:ConsensusErrorDynamics} is equivalent to the simultaneous mean square stability of
\begin{align}\label{eq.SimultaneousMJLS}
 \delta_i(t+1)=(A+\lambda_i \gamma(t)BK)\delta_i(t), \quad i=2, \ldots, N,
\end{align}
where $\lambda_i$ is the nonzero positive eigenvalues of $\mathcal{L}$ with $\lambda_2\le \dots \le \lambda_N$. The following lemmas are needed in the proof of the main result and stated first.
\begin{lemma}
\label{lem.existenceLemma}
Let $P > 0$. Suppose there exists a vector $v$, such that $v'Pv > \phi^2$ and $\phi > 0$, then there exists a vector $x$, such that the following inequalities hold
\begin{align*}
 x'P^{-1}x < 1,\, x'v > \phi.
\end{align*}
\end{lemma}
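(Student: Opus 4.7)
The plan is to prove the lemma by an explicit construction, taking the candidate vector $x$ to be a scalar multiple of $Pv$. The motivation is standard: the inequality $x'P^{-1}x < 1$ says that $x$ lies inside an ellipsoid induced by $P^{-1}$, and on such an ellipsoid the linear functional $x \mapsto x'v$ is maximized along the direction $Pv$ (equivalently, on the ray that is ``dual'' to $v$ under the weighting $P$).

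More concretely, I would set $x = \alpha P v$ for an unspecified scalar $\alpha > 0$ and compute the two quantities that matter: $x'P^{-1}x = \alpha^2 v'Pv$ and $x'v = \alpha v'Pv$. The two target inequalities then reduce to the scalar conditions
\begin{equation*}
\alpha^2 v'Pv < 1 \quad \text{and} \quad \alpha v'Pv > \phi,
\end{equation*}
i.e., $\phi/(v'Pv) < \alpha < 1/\sqrt{v'Pv}$. This interval is nonempty precisely when $\phi/(v'Pv) < 1/\sqrt{v'Pv}$, which is equivalent to $\phi^2 < v'Pv$, exactly the hypothesis of the lemma. Since $v'Pv > \phi^2 > 0$ forces $v \neq 0$ and $v'Pv > 0$, the construction is well-defined, and any $\alpha$ in the above open interval yields an $x$ with the required properties.

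There is essentially no hard obstacle here; the entire argument reduces to a one-parameter search once the ansatz $x = \alpha P v$ is chosen. The only thing worth flagging is why this ansatz is natural: maximizing $x'v$ over the ellipsoid $\{x : x'P^{-1}x \le 1\}$ (via the substitution $y = P^{-1/2}x$ and Cauchy--Schwarz) gives the supremum $\sqrt{v'Pv}$, attained at $x^\star = Pv/\sqrt{v'Pv}$. The hypothesis $v'Pv > \phi^2$ says that this supremum strictly exceeds $\phi$, so shrinking $x^\star$ slightly toward the origin keeps $x'v > \phi$ while pushing $x'P^{-1}x$ strictly below $1$, yielding the desired $x$.
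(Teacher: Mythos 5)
Your proposal is correct and takes essentially the same route as the paper: both set $x=\alpha Pv$, reduce the two conditions to $\phi/(v'Pv)<\alpha<1/\sqrt{v'Pv}$, and observe that this interval is nonempty exactly because $v'Pv>\phi^2$. The added remark motivating the ansatz via maximization of $x'v$ over the ellipsoid is a nice touch but not a substantive difference.
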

\begin{proof}
Let us choose $x =\alpha Pv$, then
\begin{align*}
 x'P^{-1}x = \alpha^2 v'Pv,\,x'v = \alpha v'Pv. 
\end{align*}
Since $v'Pv > \phi^2$, we can choose $\alpha$, such that
\begin{align*}
 \frac{\phi}{v'Pv}< \alpha < \frac{1}{\sqrt{v'Pv}}.
\end{align*}
The proof is completed.
\end{proof}

\begin{lemma}
    \label{lem.ARESolutionProperty}
Suppose that $\mathrm{det}(A)\neq 0$, then any $P>0$ that satisfies
\begin{align}
\label{eq:ARE}
   P - A'PA + A'PB(B'PB)^{-1}B'PA > 0,
\end{align}
must also satisfy
\begin{align}
  \label{eq.ARESolutoinSatisfy}
 \frac{B'(A')^{-1}PA^{-1}B}{B'PB} \le \frac{1}{\det(A)^2}. 
\end{align}
\end{lemma}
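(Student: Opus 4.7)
The plan is to reduce the lemma to a canonical form by two changes of coordinates and then extract the bound via the Cauchy--Binet identity. Using that $P>0$ is invertible, I would set $\hat A = P^{1/2}AP^{-1/2}$ and $\hat b = P^{1/2}B/\|P^{1/2}B\|$. A direct check shows $\det(\hat A)=\det(A)$, the hypothesis \eqref{eq:ARE} becomes $I > \hat A'(I-\hat b\hat b')\hat A$ with $\hat b$ a unit vector, and the ratio in \eqref{eq.ARESolutoinSatisfy} equals $\|\hat A^{-1}\hat b\|^2$. An orthogonal change of basis rotating $\hat b$ to the first standard basis vector $e_1$ preserves both the determinant and the spectral inequality, reducing the claim to: if $A$ is invertible and $I > A'(I-e_1e_1')A$, then $\|A^{-1}e_1\|^2 \le 1/\det(A)^2$.

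Next I would interpret the hypothesis as a singular-value bound on a submatrix. Partitioning $A$ into its first row and lower $(n-1)\times n$ block $M$, the projector $I-e_1e_1'$ zeros out the first row, so $A'(I-e_1e_1')A = M'M$. The condition $M'M < I_n$ is equivalent to $\sigma_{\max}(M) < 1$ (the zero eigenvalue contributed by the rank defect is harmless), which is in turn equivalent to $MM' < I_{n-1}$. Taking determinants yields $\det(MM') < 1$.

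Finally, I would evaluate $\|A^{-1}e_1\|^2$ by Cramer's rule and invoke Cauchy--Binet. The entries of $A^{-1}e_1$ are $(-1)^{1+i}\mu_i/\det(A)$ for $i=1,\dots,n$, where $\mu_i$ is the $(1,i)$-minor of $A$; by the row expansion this equals the determinant of $M$ with its $i$-th column removed. The Cauchy--Binet formula applied to the wide matrix $M$ then gives $\det(MM') = \sum_{i=1}^n \mu_i^2$, so
\[
\|A^{-1}e_1\|^2 \;=\; \frac{\sum_{i=1}^n \mu_i^2}{\det(A)^2} \;=\; \frac{\det(MM')}{\det(A)^2} \;<\; \frac{1}{\det(A)^2},
\]
which, read back through the two reductions, is \eqref{eq.ARESolutoinSatisfy}.

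The main obstacle is locating the right identity that ties the cofactor expansion of $A^{-1}e_1$ to a single determinant controlled by the hypothesis; without the Cauchy--Binet identification $\sum_i \mu_i^2 = \det(MM')$, the cofactors $\mu_i$ are difficult to bound individually. The two preliminary coordinate changes are essentially forced once one notices that the quantity $B'(A')^{-1}PA^{-1}B/(B'PB)$ is invariant under the substitution $A\mapsto P^{1/2}AP^{-1/2}$, $B\mapsto P^{1/2}B$, and then under any orthogonal rotation of the unit vector $P^{1/2}B/\|P^{1/2}B\|$.
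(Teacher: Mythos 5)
Your argument is correct, and it takes a genuinely different route from the paper's. The paper proves the lemma by contradiction: if the ratio exceeded $1/\det(A)^2$, then Lemma~\ref{lem.existenceLemma} produces a gain $K$ that is simultaneously stabilizing --- so that the matrix determinant lemma forces $|1+KA^{-1}B|<1/|\det A|$, hence $KA^{-1}B<-1+1/|\det A|$ --- and yet satisfies $(K-K^*)A^{-1}B>1/|\det A|$ with $K^*A^{-1}B=-1$, giving $KA^{-1}B>-1+1/|\det A|$, a contradiction. Your proof is instead a direct computation: the $P^{1/2}$-congruence and the rotation sending $\hat b$ to $e_1$ do preserve the determinant, the hypothesis and the ratio (I checked each), the hypothesis then reads $M'M<I$ for the lower $(n-1)\times n$ block $M$, and Cramer's rule together with Cauchy--Binet yield the exact identity $\|A^{-1}e_1\|^2=\det(MM')/\det(A)^2$. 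This buys several things the paper's argument does not: it is self-contained (no appeal to Lemma~\ref{lem.existenceLemma} or to stabilizability), it identifies the left-hand side of \eqref{eq.ARESolutoinSatisfy} exactly rather than merely bounding it, and it shows the inequality is strict whenever $n\ge 2$. What the paper's route buys in exchange is the control-theoretic reading of the bound: it is exactly the constraint that every stabilizing single-input gain satisfies $|1+KA^{-1}B|<1/|\det A|$, which is the mechanism reused in the necessity proof of Theorem~\ref{thm.SingleInputIIDIffCondition}. One small caveat in your write-up: for $n=1$ the block $M$ is empty, $\det(MM')=1$ by convention, and your strict inequality $\det(MM')<1$ degenerates to equality; this is harmless --- indeed it is precisely why the lemma states the non-strict $\le$ in \eqref{eq.ARESolutoinSatisfy}, which is attained in the scalar case.
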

\begin{proof}
    We prove this lemma by contradiction. Let
    \begin{align*}
      g(P) = P - A'PA + A'PB(B'PB)^{-1}B'PA > 0.
    \end{align*}
    Suppose that there exists a $P>0$ to~\eqref{eq:ARE}, such that
    \begin{align*}
      \frac{B'(A')^{-1}PA^{-1}B}{B'PB} > \frac{1}{(\det{A})^2},
    \end{align*}
    then we have
    \begin{align*}
      B'(A')^{-1}\frac{g(P)}{B'PB}A^{-1}B= \frac{B'(A')^{-1}PA^{-1}B}{B'PB} > \frac{1}{(\det{A})^2}.
    \end{align*}
Therefore, by Lemma~\ref{lem.existenceLemma}, there exists a $K$, such that
\begin{align}
 (K-K^*) \left(\frac{g(P)}{B'PB}\right)^{-1} (K-K^*)' < 1,\label{eq.Contradict1}
\end{align}
and
\begin{align}
  (K-K^*)A^{-1}B > \left|\frac{1}{\det A}\right|, \label{eq.Contradict2}
\end{align}
where $K^* = -B'PA/(B'PB)$.

By Schur complement lemma and the fact that $B'PB > 0$,~\eqref{eq.Contradict1} is equivalent to
\begin{multline*}
P >  A'PA - A'PB(B'PB)^{-1}B'PA \\+ (K-K^*)'B'PB(K-K^*),
\end{multline*}
or $P>(A+BK)'P(A+BK)$. Therefore, $K$ is stabilizing, i.e., $A+BK$ is strictly stable. By matrix determinant lemma, if $A+BK$ is stable, then
$ |\det(A+BK)| = |\det A||1+KA^{-1}B| < 1$.
Therefore, 
\begin{align}
  KA^{-1}B < -1+\left|\frac{1}{\det A}\right|.
  \label{eq:maxKAinvB}
\end{align}

On the other hand, by the definition of $K^*$, we have
\begin{align*}
 K^*A^{-1}B  = -\frac{B'PAA^{-1}B}{B'PB} = -1.
\end{align*}
Then, we have from~\eqref{eq.Contradict2} that
\begin{align*}
  KA^{-1}B > -1+\left|\frac{1}{\det A}\right|, 
\end{align*}
which contradicts with~\eqref{eq:maxKAinvB}. Therefore, any $P>0$ that satisfies~\eqref{eq:ARE}, must also satisfy \eqref{eq.ARESolutoinSatisfy}. 
The proof is completed.
\end{proof}

The main result is stated as follows.
\begin{theorem}
    \label{thm.SingleInputIIDIffCondition}
 Under Assumption~\ref{assumption:matrixAisUnstable} and~\ref{assumption:iidPacketLoss}, when $m=1$, the MAS~\eqref{eq:AgentDynamics} is mean square consensusable by the protocol~\eqref{eq:ConsensusProtocol} if and only if
   \begin{align}
     \label{eq.SingleInputIIDIffCondition}
     (1-p)\left[ 1-\left( \frac{\lambda_N-\lambda_2}{\lambda_N+\lambda_2} \right)^2 \right]>1-\frac{1}{(\det A)^2}. 
   \end{align}
\end{theorem}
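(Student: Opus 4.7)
The plan is to exploit the rank-one structure of $BK$ (since $m=1$) to reduce each per-mode MSS inequality to a single scalar inequality indexed by $\phi:=KA^{-1}B$, and then to recognize $(\lambda_N-\lambda_2)/(\lambda_N+\lambda_2)$ as the Chebyshev minimax radius of the affine family $\{\lambda_i\phi+1\}$. By the reduction leading to~\eqref{eq.SimultaneousMJLS}, consensusability is equivalent to the existence of a single $K$ such that, for each $i\in\{2,\ldots,N\}$, some $P_i>0$ satisfies $(1-p)(A+\lambda_iBK)'P_i(A+\lambda_iBK)+pA'P_iA<P_i$. Since $B'P_iB$ is a scalar, completing the square in $K$ rewrites this inequality as $(1-p)\lambda_i^2(B'P_iB)(K-K_i^\star)'(K-K_i^\star)<\Phi_i$, where $K_i^\star:=-B'P_iA/(\lambda_iB'P_iB)$ and $\Phi_i:=P_i-A'P_iA+(1-p)A'P_iB(B'P_iB)^{-1}B'P_iA$. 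The key observation is the $P_i$-free identity $K_i^\star A^{-1}B=-1/\lambda_i$, which motivates introducing $\phi:=KA^{-1}B$.

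For necessity, I would premultiply by $B'(A')^{-1}$ and postmultiply by $A^{-1}B$, collapsing the matrix inequality to the scalar inequality $B'(A')^{-1}P_iA^{-1}B/(B'P_iB)-p>(1-p)(\lambda_i\phi+1)^2$. Because $\Phi_i>0$ implies the Riccati inequality~\eqref{eq:ARE} for $P_i$ (the $p$-term difference is positive semi-definite), Lemma~\ref{lem.ARESolutionProperty} bounds the ratio on the left by $1/\det(A)^2$ \emph{uniformly in $P_i$}, so $(1-p)(\lambda_i\phi+1)^2<1/\det(A)^2-p$ for every $i$. Specialising to $i=2$ and $i=N$ and noting that $\min_\phi\max\{|\lambda_2\phi+1|,\,|\lambda_N\phi+1|\}=(\lambda_N-\lambda_2)/(\lambda_N+\lambda_2)$ (attained at $\phi=-2/(\lambda_2+\lambda_N)$, since the two affine functions of $\phi$ are balanced in magnitude only there) yields $(1-p)((\lambda_N-\lambda_2)/(\lambda_N+\lambda_2))^2<1/\det(A)^2-p$, which rearranges to~\eqref{eq.SingleInputIIDIffCondition}.

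For sufficiency, I would reverse the construction: set $K:=-\mu^\star B'PA/(B'PB)$ with $\mu^\star:=2/(\lambda_2+\lambda_N)$, leaving $P>0$ to be chosen. A direct expansion gives $(A+\lambda_iBK)'P(A+\lambda_iBK)=A'PA-\lambda_i\mu^\star(2-\lambda_i\mu^\star)A'PB(B'PB)^{-1}B'PA$, and the scalar factor $\lambda_i\mu^\star(2-\lambda_i\mu^\star)$ attains its minimum $\eta:=1-((\lambda_N-\lambda_2)/(\lambda_N+\lambda_2))^2$ precisely at the endpoints $\lambda_2$ and $\lambda_N$. Hence the \emph{single} Riccati-type inequality $P>A'PA-(1-p)\eta A'PB(B'PB)^{-1}B'PA$ is enough to certify MSS of all $N-1$ modes with the common $P$, and standard results on the modified algebraic Riccati inequality for single-input Bernoulli-dropped systems guarantee a positive-definite solution $P$ iff $(1-p)\eta>1-1/\det(A)^2$. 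The main obstacle is the necessity step: one must extract a $P_i$-independent obstruction, and Lemma~\ref{lem.ARESolutionProperty} is exactly the tool that converts the $P_i$-dependent ratio $B'(A')^{-1}P_iA^{-1}B/(B'P_iB)$ into the universal bound $1/\det(A)^2$, after which the problem collapses to a one-dimensional minimax.
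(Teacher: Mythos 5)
Your proposal is correct and follows essentially the same route as the paper: for necessity you complete the square in $K$, scalarize by pre/post-multiplying with $B'(A')^{-1}$ and $A^{-1}B$, bound the ratio $B'(A')^{-1}P_iA^{-1}B/(B'P_iB)$ by $1/\det(A)^2$ via Lemma~\ref{lem.ARESolutionProperty}, and conclude with the Chebyshev minimax $\min_\phi\max_i|\lambda_i\phi+1|=(\lambda_N-\lambda_2)/(\lambda_N+\lambda_2)$, which is equivalent to the paper's interval-intersection condition $\underline{\beta}_2<\overline{\beta}_N$. The only difference is that you spell out the sufficiency (common $P$, gain $-\frac{2}{\lambda_2+\lambda_N}(B'PB)^{-1}B'PA$, and solvability of the modified Riccati inequality with $\gamma_c=1-1/\det(A)^2$ for rank-one $B$), which the paper simply delegates to a citation.
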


\begin{proof}
The sufficiency follows from Theorem 1 in~\cite{XuLiang2016Automatica}. Only the necessity is proved here, which follows from the simultaneous mean square stability of~\eqref{eq.SimultaneousMJLS}. Let $\mu$ and $\sigma^2$ be the mean and variance of $\gamma(t)$, respectively, then we have $\mu=1-p$ and $\sigma^2=p(1-p)$.
   
In view of Lemma 1 in~\cite{XiaoNan2012TAC},~\eqref{eq.SimultaneousMJLS} is mean square stable for i.i.d.\ $\{\gamma(t)\}_{t\ge 0}$ if and only if there exist $P_i>0, i=2, \ldots, N$ and $K$, such that
\begin{align*}
 P_i>(A+\lambda_i\mu BK)'P_i(A+\lambda_i\mu BK) + \lambda_i^2 \sigma^2 K'B'P_iBK 
\end{align*}
for all $i=2, \ldots, N$. With some manipulations, we can show that
\begin{multline*}
    P_i-A'P_iA+\frac{\mu^2}{\mu^2+\sigma^2} A'P_iB(B'P_iB)^{-1}B'P_iA \\
    > \lambda_i^2
  (\mu^2+\sigma^2)
  \left(K+\frac{\mu}{\lambda_i(\mu^2+\sigma^2)}(B'P_iB)^{-1}B'P_iA\right)'\\
 \times B'P_iB\left(K+ \frac{\mu}{\lambda_i(\mu^2+\sigma^2)}(B'P_iB)^{-1}B'P_iA\right).
\end{multline*}
Left and right multiply the above inequality with $B'(A')^{-1}$ and $A^{-1}B$,
we can obtain that
\begin{multline}
  \label{eq:Constraints}
 \left(\lambda_i\sqrt{\mu^2+\sigma^2} KA^{-1}B+\frac{\mu}{\sqrt{\mu^2+\sigma^2}}\right)^2 \\< \frac{B'(A')^{-1}P_iA^{-1}B}{B'P_iB}+\frac{\mu^2}{\mu^2+\sigma^2}-1. 
\end{multline}
Since $P_i$ satisfies~\eqref{eq:ARE}, we have from Lemma~\ref{lem.ARESolutionProperty} that  
\begin{align*}
 \frac{B'(A')^{-1}P_iA^{-1}B}{B'P_iB}\le \frac{1}{\det(A)^2}. 
\end{align*}
Therefore we have from~\eqref{eq:Constraints} that
\begin{multline*}
{\left(
\lambda_i\sqrt{\mu^2+\sigma^2} KA^{-1}B+\frac{\mu}{\sqrt{\mu^2+\sigma^2}}
\right)}^2 \\< \frac{1}{\det(A)^2}   + \frac{\mu^2}{\mu^2+\sigma^2} - 1,
\end{multline*}
which further indicates
\begin{equation}
\label{eq:f0Exist}
\underline{\beta}_i<    \left|KA^{-1}B \right| < \overline{\beta}_i
\end{equation}
with
\begin{equation*}
\begin{aligned}
\underline{\beta}_i&=\frac{ -\sqrt{ \frac{1}{a_0^2} +
\frac{\mu^2}{\mu^2+\sigma^2} -1}
+\frac{\mu}{\sqrt{\mu^2+\sigma^2}} } {\lambda_i \sqrt{\mu^2+\sigma^2}},  \\
\overline{\beta}_i&= \frac{ \sqrt{ \frac{1}{a_0^2} +
\frac{\mu^2}{\mu^2+\sigma^2} -1}
+\frac{\mu}{\sqrt{\mu^2+\sigma^2}} } {\lambda_i
\sqrt{\mu^2+\sigma^2}},
\end{aligned}
\end{equation*}
where $a_0=\det(A)$.

Since there exists a common $\left| KA^{-1}B \right|$, such that~\eqref{eq:f0Exist} holds for all $i = 2, \ldots, N$. $ \cap_i \bigl(\underline{\beta}_i ,\overline{\beta}_i \bigr)$ must be non-empty, which implies $\underline{\beta}_2 <\overline{\beta}_N $. Further calculation shows that
\begin{align}
  \frac{\mu^2}{\mu^2+\sigma^2}\times
\left[
    1-{\left(
          \frac{\lambda_N-\lambda_2}{\lambda_N+\lambda_2}
      \right)}^2
\right] > 1-\frac{1}{a_0^2}. \label{eq.ConsensusConditionForGeneralFading}
\end{align}
Substituting the definitions of $\mu$, $\sigma^2$ and $a_0$, we can obtain~\eqref{eq.SingleInputIIDIffCondition}.
The proof is completed. \end{proof}

Furthermore, in view of the above derivations, we have the following consensusability condition for the general fading case studied in~\cite{XuLiang2016Automatica}.
\begin{corollary}
Under Assumption~\ref{assumption:matrixAisUnstable}, if $\gamma_{ij}(t)=\gamma(t)$  for all $(i,j)\in\mathcal{E}$ and  $\{ \gamma(t) \}_{t\ge 0} $ is i.i.d.\ with mean $\mu$ and variance $\sigma^2$, when $m=1$, the MAS~\eqref{eq:AgentDynamics} is mean square consensusable by the protocol~\eqref{eq:ConsensusProtocol} if and only if~\eqref{eq.ConsensusConditionForGeneralFading} holds. \end{corollary}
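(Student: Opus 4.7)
The plan is to observe that neither direction of Theorem~\ref{thm.SingleInputIIDIffCondition} actually exploited the Bernoulli structure of $\gamma(t)$ beyond the very final substitution $\mu = 1-p$, $\sigma^2 = p(1-p)$. So I would simply recycle the proof of Theorem~\ref{thm.SingleInputIIDIffCondition} and read off the conclusion one step earlier, retaining the general moments $\mu$ and $\sigma^2$.

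For the necessity direction, the argument goes through unchanged. Mean square consensusability is equivalent to the existence of a single $K$ that simultaneously mean square stabilizes~\eqref{eq.SimultaneousMJLS} for $i = 2, \ldots, N$. Lemma 1 of~\cite{XiaoNan2012TAC} characterizes mean square stability of an i.i.d.-driven system purely through the first two moments of the driving sequence, yielding the same matrix inequality on $P_i$. From there the derivation of~\eqref{eq:Constraints}, the application of Lemma~\ref{lem.ARESolutionProperty} to bound $B'(A')^{-1}P_iA^{-1}B/(B'P_iB)$ by $1/\det(A)^2$, and the intersection of the feasibility intervals $(\underline{\beta}_i,\overline{\beta}_i)$ for $|KA^{-1}B|$ all follow verbatim. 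This intersection, via $\underline{\beta}_2 < \overline{\beta}_N$, gives exactly~\eqref{eq.ConsensusConditionForGeneralFading}, never invoking the Bernoulli assumption at all. For the sufficiency direction, Theorem 1 of~\cite{XuLiang2016Automatica} was stated under the general i.i.d.\ fading model parametrized by $\mu$ and $\sigma^2$, and its sufficient condition is precisely~\eqref{eq.ConsensusConditionForGeneralFading}; invoking that result closes the argument.

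The only subtle point I anticipate is justifying that Lemma 1 of~\cite{XiaoNan2012TAC} indeed delivers a mean-and-variance characterization valid for an arbitrary i.i.d.\ scalar multiplier, not only for a Bernoulli one. This is standard, since the second-moment recursion for an i.i.d.\ jump system depends on the driving sequence only through $\mu$ and $\sigma^2$, but it is worth making explicit. Once this is noted, no new computation is needed: the corollary is essentially a restatement of the intermediate inequality~\eqref{eq.ConsensusConditionForGeneralFading} already produced in the proof of Theorem~\ref{thm.SingleInputIIDIffCondition}, combined with the sufficiency result from~\cite{XuLiang2016Automatica}.
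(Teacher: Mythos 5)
Your proposal is correct and matches the paper's intent exactly: the corollary is justified there by the single remark ``in view of the above derivations,'' i.e., the necessity argument of Theorem~\ref{thm.SingleInputIIDIffCondition} already produces~\eqref{eq.ConsensusConditionForGeneralFading} in terms of $\mu$ and $\sigma^2$ before the Bernoulli substitution, and sufficiency is Theorem 1 of~\cite{XuLiang2016Automatica}. Your extra remark that the second-moment stability test from~\cite{XiaoNan2012TAC} depends on the i.i.d.\ multiplier only through its first two moments is a worthwhile clarification, but it does not change the route.
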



\section{Identical Markovian Packet Loss \label{sec.IdenticalMarkovLoss}}

In the section, we consider a more general case that $u_i$ is a $\mathbb{R}^m$ vector with $m \ge 1$ and $\gamma(t)$ is a Markov process and make the following assumption.

\begin{assumption}
    \label{assump.identicalPacketLoss}
    $\gamma_{ij}(t)=\gamma(t)$ for all $(i, j)\in \mathcal{E}$ and  $t\ge 0$. Moreover, $\{\gamma(t)\}_{t\ge 0}$ is a time-homogeneous Markov process with two states $\{0, 1\}$ and the transition probability matrix $Q$ is 
\begin{equation}
    \label{eq:twostateQ}
    Q =
    \left[ 
    \begin{matrix}
        1-q & q \\
        p   & 1-p
        \end{matrix} \right],
\end{equation}
where $0<p<1$ represents the failure rate and $0<q<1$ denotes the recovery rate.
\end{assumption}

\begin{remark}
Markov models are widely used to capturing temporal correlations of channel conditions~\cite{GoldsmithA2005Book, Huang2007Automatica}. However, due to the correlations of packet losses over time, the methods used to deal with the i.i.d.\ channel fading in~\cite{XuLiang2016Automatica} cannot be applied to the Markovian packet loss case.
\end{remark}

Since $\{\gamma(t)\}_{t\ge 0}$ is a Markov process, the consensusability is equivalent to the simultaneous mean square stabilizability of the $N-1$ Markov jump linear systems~\eqref{eq.SimultaneousMJLS}. In view of Theorem 3.9 in~\cite{CostaO2005Book} describing the stability of Markov jump linear systems, we can obtain the following consensusability condition.

\begin{theorem} \label{thm.IffCondition} Under Assumption~\ref{assumption:matrixAisUnstable} and~\ref{assump.identicalPacketLoss}, the MAS~\eqref{eq:AgentDynamics} is mean square consensusable by the protocol~\eqref{eq:ConsensusProtocol} if and only if either of the following conditions holds

    \begin{enumerate}
      \item There exist $K$, $P_{i,1}>0$, $P_{i,2}>0$ with $i=2, \ldots, N$, such that
\begin{gather*}
    \begin{multlined}[t][0.9\columnwidth]
        P_{i,1}- (1-q) A' P_{i,1}A \\
        - q (A+ \lambda_i BK)' P_{i,2} (A+\lambda_iBK)>0,     
    \end{multlined}
   \\
   \begin{multlined}[t][0.9\columnwidth]
       P_{i,2}- pA' P_{i,1}A \\
       - (1-p) (A+ \lambda_i BK)' P_{i,2}(A+\lambda_iBK)>0.
   \end{multlined}
\end{gather*}
\item There exists $K$ such that
\begin{align*}
\rho \left(\mathcal{H}_i\right)<1,
\end{align*}
for all $i=2, \ldots, N$ with
    \end{enumerate}
\begin{multline*}
  \mathcal{H}_i=\begin{bmatrix}
     (1-q) A\otimes A & p (A+\lambda_i BK)\otimes(A+\lambda_i BK) \\
     q A\otimes A & (1-p)(A+\lambda_i BK)\otimes (A+\lambda_i BK) 
 \end{bmatrix}.
\end{multline*}

\end{theorem}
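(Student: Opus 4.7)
The plan is to reduce consensusability to simultaneous mean square stability of the $N-1$ decoupled Markov jump linear systems in~\eqref{eq.SimultaneousMJLS}, then invoke the standard characterization of MJLS mean square stability from Theorem~3.9 of~\cite{CostaO2005Book} in its two equivalent forms (coupled Lyapunov inequalities and spectral radius of an augmented Kronecker operator). The reduction itself has already been established in the excerpt: the consensus error dynamics~\eqref{eq:ConsensusErrorDynamics} is mean square stable if and only if each of the scalar-eigenvalue systems~\eqref{eq.SimultaneousMJLS} is mean square stable under a \emph{common} gain $K$, so all that remains is to instantiate Costa's theorem on each individual system and impose the common $K$ requirement.

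For part~(1), I would read off the coupled Lyapunov condition from Theorem~3.9 of~\cite{CostaO2005Book}. Under Assumption~\ref{assump.identicalPacketLoss} the MJLS~\eqref{eq.SimultaneousMJLS} has two modes, namely $A_0 = A$ when $\gamma(t)=0$ and $A_1 = A + \lambda_i B K$ when $\gamma(t)=1$, with the transition probabilities encoded by $Q$ in~\eqref{eq:twostateQ}. Costa's theorem asserts mean square stability is equivalent to the existence of $P_{i,1}, P_{i,2} > 0$ satisfying
\begin{equation*}
P_{i,r} > \sum_{s\in\{0,1\}} Q_{r,s}\, A_s' P_{i,s+1} A_s,\quad r\in\{0,1\},
\end{equation*}
and substituting $Q_{0,0}=1-q$, $Q_{0,1}=q$, $Q_{1,0}=p$, $Q_{1,1}=1-p$, together with the two modes $A$ and $A+\lambda_i BK$, produces exactly the two inequalities claimed in condition~(1). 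Requiring a common $K$ for all $i = 2,\dots,N$ yields simultaneous stability.

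For part~(2), the same theorem in~\cite{CostaO2005Book} characterizes mean square stability through the spectral radius of the operator acting on the stacked conditional second moments $\operatorname{vec}(\mathbb{E}\{\delta_i(t)\delta_i(t)' \mathbf{1}_{\gamma(t)=r}\})$. That operator, in matrix form, is
\begin{equation*}
\begin{bmatrix}
Q_{0,0}\, A\otimes A & Q_{1,0}\, (A+\lambda_i BK)\otimes(A+\lambda_i BK)\\
Q_{0,1}\, A\otimes A & Q_{1,1}\, (A+\lambda_i BK)\otimes(A+\lambda_i BK)
\end{bmatrix},
\end{equation*}
which, after substituting the entries of $Q$, is precisely $\mathcal{H}_i$. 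Hence $\rho(\mathcal{H}_i) < 1$ for every $i = 2, \dots, N$ is equivalent to simultaneous mean square stability.

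The only non-routine step is verifying that the transition-probability indexing in Costa's formulation matches the placement of $p,q,1-p,1-q$ in both the LMIs and the Kronecker block matrix; one has to be careful about whether the entries $Q_{r,s}$ appear on the mode-$r$ or mode-$s$ side and whether the spectral-radius operator uses $Q$ or its transpose (columns vs.\ rows). Once that bookkeeping is pinned down, both conditions follow immediately, and their equivalence to each other is part of the same Theorem~3.9 in~\cite{CostaO2005Book}. I do not anticipate any real technical obstacle beyond this indexing check.
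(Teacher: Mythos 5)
Your proposal is correct and follows essentially the same route as the paper, which offers no separate proof of this theorem beyond the reduction to the $N-1$ decoupled Markov jump linear systems~\eqref{eq.SimultaneousMJLS} and the citation of Theorem~3.9 in~\cite{CostaO2005Book} --- exactly the two steps you carry out. The indexing issue you flag does resolve cleanly: condition~(1) is the ``destination-mode'' Lyapunov form $P_r>\sum_s Q_{rs}A_s'P_sA_s$ while $\mathcal{H}_i$ is the ``current-mode'' second-moment operator for $\delta_i(t+1)=A_{\gamma(t)}\delta_i(t)$; the two are equivalent because the corresponding operators are the compositions $\mathcal{P}\circ\mathcal{A}$ and $\mathcal{A}\circ\mathcal{P}$ (transition averaging composed with the mode-wise congruence), which share the same nonzero spectrum, and both forms are listed among the equivalences in Costa's Theorem~3.9.
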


With similar transformations as in the proof of Theorem~\ref{thm.NumericalSufficieny}, the consensus criterion in Theorem~\ref{thm.IffCondition}.1) can be shown to be equivalent to a feasibility  problem with bilinear matrix inequality (BMI) constraints. It is well known that checking the solvability of a BMI, is generally NP-hard~\cite{TokerOnur1995ACC}. Therefore, in the sequel, we propose a sufficient consensus condition in terms of the feasibility of linear matrix inequalities (LMIs) by a fixed $P_{i,1}$ and $P_{i,2}$.

\begin{theorem}
    \label{thm.NumericalSufficieny}
 Under Assumption~\ref{assumption:matrixAisUnstable} and~\ref{assump.identicalPacketLoss},
if there exist $Q_1>0$, $Q_2>0$, $Z_1$, $Z_2$ such that the following LMIs hold, 
\begin{gather}
  \begin{bmatrix}
      Q_1                  & *   & *   & * \\
      \sqrt{qc}(AQ_1+BZ_1) & Q_2 & *   & * \\
      \sqrt{q(1-c)}A Q_1   & 0   & Q_2 & * \\
      \sqrt{1-q}A Q_1      & 0   & 0   & Q_1 
  \end{bmatrix}>0, \label{eq.LMITest1}\\
  \begin{bmatrix}
      Q_2                      & *   & *   & * \\
      \sqrt{(1-p)c}(AQ_2+BZ_2) & Q_2 & *   & * \\
      \sqrt{(1-p)(1-c)}AQ_2    & 0   & Q_2 & * \\
      \sqrt{p}AQ_2             & 0   & 0   & Q_1 
  \end{bmatrix}>0,\label{eq.LMITest2}
\end{gather}
where $c=1- \left( \frac{\lambda_N-\lambda_2}{\lambda_N+\lambda_2} \right)^2>0$, then the MAS~\eqref{eq:AgentDynamics} is mean square consensusable by the protocol~\eqref{eq:ConsensusProtocol} and an admissible control gain is given by
\begin{align*}
 K=-\frac{2}{\lambda_2+\lambda_N}(B'Q_2^{-1}B)^{-1}B'Q_2^{-1}A. 
\end{align*}
\end{theorem}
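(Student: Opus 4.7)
The plan is to verify condition 1) of Theorem~\ref{thm.IffCondition} by choosing $P_{i,1}=Q_1^{-1}$ and $P_{i,2}=Q_2^{-1}$ independently of $i$, together with the gain $K$ of the theorem, and deducing the two strict Lyapunov-type inequalities of that condition for every eigenvalue $\lambda_i\in[\lambda_2,\lambda_N]$ of $\mathcal{L}$.

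First, I would apply the Schur complement to~\eqref{eq.LMITest1} and conjugate by $Q_1^{-1}$; setting $K_1:=Z_1Q_1^{-1}$ so that $(AQ_1+BZ_1)Q_1^{-1}=A+BK_1$, this yields
\begin{align*}
Q_1^{-1}&>(1-q)A'Q_1^{-1}A+qc(A+BK_1)'Q_2^{-1}(A+BK_1)\\
&\quad+q(1-c)A'Q_2^{-1}A,
\end{align*}
and the parallel reduction of~\eqref{eq.LMITest2} with $K_2:=Z_2Q_2^{-1}$ produces the analogous inequality with $(1-q,q,Q_1^{-1})$ replaced by $(p,1-p,Q_2^{-1})$ on the left. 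Because the LQ-optimal gain $\hat K:=-(B'Q_2^{-1}B)^{-1}B'Q_2^{-1}A$ minimizes $(A+BX)'Q_2^{-1}(A+BX)$ at the value $A'Q_2^{-1}A-M$, where $M:=A'Q_2^{-1}B(B'Q_2^{-1}B)^{-1}B'Q_2^{-1}A$, both $K_1$ and $K_2$ may be replaced by $\hat K$ on the right-hand sides without breaking the inequalities, yielding the simpler consequences
\begin{align*}
Q_1^{-1}&>(1-q)A'Q_1^{-1}A+qA'Q_2^{-1}A-qcM,\\
Q_2^{-1}&>pA'Q_1^{-1}A+(1-p)A'Q_2^{-1}A-(1-p)cM.
\end{align*}

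Second, I would exploit that $K=\frac{2}{\lambda_2+\lambda_N}\hat K$, so that a direct expansion gives
\begin{align*}
(A+\lambda BK)'Q_2^{-1}(A+\lambda BK)=A'Q_2^{-1}A+\frac{4\lambda(\lambda-\lambda_2-\lambda_N)}{(\lambda_2+\lambda_N)^2}M.
\end{align*}
The scalar $\lambda(\lambda-\lambda_2-\lambda_N)$ is a convex-up parabola whose maximum on $[\lambda_2,\lambda_N]$ is attained at both endpoints with common value $-\lambda_2\lambda_N$; hence for every $\lambda_i\in[\lambda_2,\lambda_N]$,
\begin{align*}
(A+\lambda_iBK)'Q_2^{-1}(A+\lambda_iBK)\le A'Q_2^{-1}A-cM.
\end{align*}

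Substituting this matrix bound into the two reduced inequalities from the previous step immediately produces both strict inequalities required by Theorem~\ref{thm.IffCondition}.1) for every $i=2,\ldots,N$, and mean square consensus follows. The main obstacle lies in the second step: one must recognize that the specific factor $\frac{2}{\lambda_2+\lambda_N}$ in $K$ is exactly the scaling that equalizes the Lyapunov form $(A+\lambda BK)'Q_2^{-1}(A+\lambda BK)$ at $\lambda=\lambda_2$ and $\lambda=\lambda_N$, allowing a single constant $c$ to simultaneously dominate every intermediate $\lambda_i$; any other scaling would force a larger $c$ and sever the chain connecting the LMI to Theorem~\ref{thm.IffCondition}.
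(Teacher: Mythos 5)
Your proposal is correct and follows essentially the same route as the paper's proof: Schur complement plus the congruence $Q_\ell\mapsto Q_\ell^{-1}$, replacement of $K_1,K_2$ by the LQ-optimal gain via completion of squares, and the observation that $\check{k}=-\tfrac{2}{\lambda_2+\lambda_N}$ makes $2\lambda\check{k}+\lambda^2\check{k}^2\le -c$ uniformly over $[\lambda_2,\lambda_N]$ (the paper phrases this as $-c=\min_k\max_i(\lambda_i^2k^2+2\lambda_ik)$), which feeds the bounds into Theorem~\ref{thm.IffCondition}.1) with $P_{i,1}=Q_1^{-1}$, $P_{i,2}=Q_2^{-1}$.
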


\begin{proof}
If there exist $Q_1>0$, $Q_2>0$, $Z_1$, $Z_2$ such that \eqref{eq.LMITest1} and~\eqref{eq.LMITest2} hold, then there exist $P_1=Q_1^{-1}>0$, $P_2=Q_2^{-1}>0$, $K_1=Z_1P_1$ and $K_2=Z_2P_2$ such that
\begin{gather}
  \begin{bmatrix}
      P_{1}^{-1}              & *        & *        & * \\
   \sqrt{qc}(A+BK_1) P_1^{-1} & P_2^{-1} & *        & * \\
     \sqrt{q(1-c)}A P_1^{-1}  & 0        & P_2^{-1} & * \\
        \sqrt{1-q} AP_1^{-1}  & 0        & 0        & P_1^{-1}
  \end{bmatrix}>0, \label{eq.LMITestProofInvP1LMI}\\
  \begin{bmatrix}
      P_2^{-1}                      & *        & *        & * \\
  \sqrt{(1-p)c}(A+BK_2)P_2^{-1}     & P_2^{-1} & *        & * \\
         \sqrt{(1-p)(1-c)}AP_2^{-1} & 0        & P_2^{-1} & * \\
      \sqrt{p}AP_2^{-1}             & 0        & 0        & P_1^{-1}
  \end{bmatrix}>0. \label{eq.LMITestProofInvP2LMI}
\end{gather}
Left and right multiply \eqref{eq.LMITestProofInvP1LMI} with
$\mathrm{diag}(P_1, I, I, I)$, and left and right multiply
\eqref{eq.LMITestProofInvP2LMI} with $\mathrm{diag}(P_2, I, I, I)$, we obtain
\begin{gather*}
  \begin{bmatrix}
      P_{1}             & *        & *        & * \\
      \sqrt{qc}(A+BK_1) & P_2^{-1} & *        & * \\
       \sqrt{q(1-c)}A   & 0        & P_2^{-1} & * \\
      \sqrt{1-q} A      & 0        & 0        & P_1^{-1}
  \end{bmatrix}>0,\\
  \begin{bmatrix}
      P_2                   & *        & *        & * \\
      \sqrt{(1-p)c}(A+BK_2) & P_2^{-1} & *        & * \\
      \sqrt{(1-p)(1-c)}A    & 0        & P_2^{-1} & * \\
      \sqrt{p}A             & 0        & 0        & P_1^{-1}
  \end{bmatrix}>0.
\end{gather*}

In view of Schur complement lemma, we know that 

\begin{gather}
    \begin{multlined}[b][0.85\columnwidth]
   P_{1}- (1-q) A' P_{1}A - q(1-c)A'P_2A \\- qc (A+BK_1)'P_2(A+BK_1)>0,\label{eq.LyapunovLikeInequaltiy1}     
    \end{multlined} \\
    \begin{multlined}[b][0.85\columnwidth]
  P_{2}- pA' P_{1}A - (1-p)(1-c) A'P_2A \\- (1-p)c (A+BK_2)'P_2(A+BK_2)>0. \label{eq.LyapunovLikeInequaltiy2} \end{multlined}
\end{gather}

For any $P_2>0$ and $K$, we have
\begin{multline*}
  (A+BK)'P_2(A+BK) \\
    =A'P_2A- A'P_2B(B'P_2B)^{-1}B'P_2A \\
   + (K+(B'P_2B)^{-1}B'P_2A)'(B'P_2B)\\
    \times (K+(B'P_2B)^{-1}B'P_2A), 
\end{multline*}
which implies
\begin{multline*}
    A'P_2B(B'P_2B)^{-1}B'P_2A\\
    \ge  A'P_2A- (A+BK)'P_2(A+BK).
\end{multline*}
Therefore
\begin{multline*}
     -c A'P_2A+ c (A+BK)'P_2(A+BK)\\
    \ge  -cA'P_2B(B'P_2B)^{-1}B'P_2A,
\end{multline*}
for any $K$ and $P_2>0$.

In view of the above result and~\eqref{eq.LyapunovLikeInequaltiy1}
\eqref{eq.LyapunovLikeInequaltiy2}, we have,
\begin{gather}
    \begin{multlined}[b][0.85\columnwidth]
   \frac{P_{1}- (1-q) A' P_{1}A}{q}>A'P_2A\\-cA'P_2B(B'P_2B)^{-1}B'P_2A,\label{eq.Suffi1ForAllI} 
    \end{multlined}\\
    \begin{multlined}[b][0.85\columnwidth]
    \frac{P_{2}- pA' P_{1}A}{1-p} >  A'P_2A\\-cA'P_2B(B'P_2B)^{-1}B'P_2A. \label{eq.Suffi2ForAllI}
    \end{multlined}
\end{gather}
Since $-c=\min_k\max_i (\lambda_i^2 k^2 +2\lambda_i k)$ and the optimal $k$ to the minmax problem is $ \check{k}=-\frac{2}{\lambda_2+\lambda_N}$, we know that
\begin{gather*}
    \begin{multlined}[b][0.85\columnwidth]
   \frac{P_{1}- (1-q) A' P_{1}A}{q} > A'P_2A \\+ (\lambda_i^2\check{k}^2+2\lambda_i \check{k} )A'P_2B(B'P_2B)^{-1}B'P_2A,      
    \end{multlined}
  \\
  \begin{multlined}[b][0.85\columnwidth]
      \frac{P_{2}- pA' P_{1}A}{1-p} >  A'P_2A \\
      + (\lambda_i^2\check{k}^2+2\lambda_i \check{k})A'P_2B(B'P_2B)^{-1}B'P_2A 
  \end{multlined}
  \end{gather*}
hold for all $i=2,\ldots, N$. Therefore Theorem~\ref{thm.IffCondition}.1) is satisfied with
  \begin{gather*}
    P_{i,1} =P_1, P_{i,2}=P_2, K=\check{k}(B'P_2B)^{-1}B'P_2A.
  \end{gather*}
The proof is completed. 
\end{proof}

\subsection{Analytic Consensus Conditions}

The criterion stated in Theorem~\ref{thm.NumericalSufficieny} is easy to verify. However, it fails to provide insights into the consensusability problem. In the following, we provide analytical consensusability conditions, which show directly how the channel properties, the network topology and the agent dynamics interplay with each other to allow the existence of a distributed consensus controller. The following lemma is needed in proving the main result and is stated first.

\begin{lemma}[\cite{SchenatoL2007ProcIEEE}]~\label{lemma:MAREsolvability} Under Assumption~\ref{assumption:matrixAisUnstable}, if $(A, B)$ is controllable, then
    \begin{equation} \label{eq.MARE} P>A'PA-\gamma A'PB(B'PB)^{-1}B'PA
    \end{equation}
    admits a solution $P>0$, if and only if $\gamma$ is greater than a critical value $\gamma_c>0$.
\end{lemma}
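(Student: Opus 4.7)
The plan is to view the right-hand side as the operator $\Phi_\gamma(P):=A'PA-\gamma A'PB(B'PB)^{-1}B'PA$ and study the feasibility set $\Gamma:=\{\gamma\in[0,1]\,:\,\exists\,P>0\text{ with }P>\Phi_\gamma(P)\}$. Setting $\gamma_c:=\inf\Gamma$, I would establish that $\Gamma=(\gamma_c,1]$ with $\gamma_c\in(0,1)$ via three steps: upward closedness of $\Gamma$, feasibility at $\gamma=1$, and a strictly positive lower bound on $\gamma_c$.

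Step one (monotonicity): since $A'PB(B'PB)^{-1}B'PA\ge 0$ for every $P>0$, the map $\gamma\mapsto \Phi_\gamma(P)$ is non-increasing, so any $P$ certifying $P>\Phi_{\gamma_1}(P)$ also certifies $P>\Phi_{\gamma_2}(P)$ for every $\gamma_2\ge\gamma_1$. Thus $\Gamma$ is upward closed in $[0,1]$. Step two (feasibility at $\gamma=1$): the completing-the-square identity
\begin{equation*}
(A+BK)'P(A+BK)=A'PA-A'PB(B'PB)^{-1}B'PA+(K-K^*)'B'PB(K-K^*),
\end{equation*}
with $K^*=-(B'PB)^{-1}B'PA$, reduces $P>A'PA-A'PB(B'PB)^{-1}B'PA$ to the existence of a $K$ with $P>(A+BK)'P(A+BK)$; stabilizability of $(A,B)$ (implied by controllability) together with the standard discrete-time Lyapunov equation supplies such a pair $(K,P)$, so $1\in\Gamma$.

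Step three, which I expect to be the main obstacle, is showing $\gamma_c>0$. At $\gamma=0$ the inequality $P>A'PA$ would force $A$ to be Schur-stable, contradicting Assumption~\ref{assumption:matrixAisUnstable}, so $0\notin\Gamma$. For a quantitative bound I would exploit that $P^{1/2}B(B'PB)^{-1}B'P^{1/2}$ is an orthogonal projector, giving $A'PB(B'PB)^{-1}B'PA\le A'PA$ and hence $\Phi_\gamma(P)\ge(1-\gamma)A'PA$. Any feasible $P$ therefore satisfies the Lyapunov inequality $P>(1-\gamma)A'PA$, which in turn forces $\sqrt{1-\gamma}\,A$ to be Schur-stable, i.e., $\gamma>1-\rho(A)^{-2}$. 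Whenever $\rho(A)>1$ this yields $\gamma_c\ge 1-\rho(A)^{-2}>0$ immediately; the borderline case $\rho(A)=1$ demands a finer argument that exploits the rank deficiency of $B$ (only $m\le n$ columns, so the projection-based correction cannot dominate $A'PA$ on the uncontrolled directions), as done via the Riccati-iteration fixed-point argument in~\cite{SchenatoL2007ProcIEEE}. Combining the three steps, the inequality admits a $P>0$ solution if and only if $\gamma>\gamma_c$ with $\gamma_c\in(0,1)$.
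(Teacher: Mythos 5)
The paper offers no proof of this lemma --- it is imported verbatim from \cite{SchenatoL2007ProcIEEE} --- so there is nothing in-paper to compare against; your reconstruction is judged on its own. The three main steps are sound. Monotonicity of the feasibility set follows, as you say, from $A'PB(B'PB)^{-1}B'PA\ge 0$ (well-defined because $B$ has full column rank and $P>0$). Feasibility at $\gamma=1$ follows from completing the square together with stabilizability of $(A,B)$ and the discrete Lyapunov equation. The projector bound $P^{1/2}B(B'PB)^{-1}B'P^{1/2}\le I$ correctly yields $\Phi_\gamma(P)\ge(1-\gamma)A'PA$, hence $\gamma>1-\rho(A)^{-2}$ for any feasible $\gamma$. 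One small omission: to obtain the conclusion in the form ``feasible iff $\gamma>\gamma_c$'' rather than ``$\gamma\ge\gamma_c$'' you must also rule out $\gamma_c\in\Gamma$; this is immediate because $P-\Phi_\gamma(P)$ is affine, hence continuous, in $\gamma$ for fixed $P$, so a certificate at $\gamma_c$ would also certify $\gamma_c-\varepsilon$, contradicting $\gamma_c=\inf\Gamma$. Worth one sentence.

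The only substantive issue is the borderline case $\rho(A)=1$, which you flag and defer to a ``finer argument'' in the reference. No such argument can close this gap in the direction you hope: when every eigenvalue of $A$ lies exactly on the unit circle the critical value is in fact $\gamma_c=0$, not positive. The scalar instance $A=1$, $B=1$ gives $P>P-\gamma P$, which is feasible precisely for $\gamma>0$; more generally, the paper's own Remark records $\gamma_c=1-1/\prod_i|\lambda_i(A)|^2$ when $\mathrm{rank}(B)=1$, which vanishes whenever all $|\lambda_i(A)|=1$ --- a situation permitted by Assumption~\ref{assumption:matrixAisUnstable}. So the strict positivity ``$\gamma_c>0$'' in the lemma as stated is an overstatement inherited from the citation; the correct reading is $\gamma_c\ge 0$ in general, with $\gamma_c>0$ guaranteed exactly when $\rho(A)>1$, which is the case your quantitative bound already covers. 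In short: your argument proves everything the lemma can truthfully claim, and the residual discrepancy lies in the statement rather than in your proof; you should say so explicitly instead of promising a rescue of the marginal case.
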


\begin{remark} The value $\gamma_c$ is of great importance in determining the critical lossy probability in Kalman filtering over intermittent channels~\cite{SinopoliBruno2004TAC, SchenatoL2007ProcIEEE, MoYilin2008CDC}. It has been shown that the critical value $\gamma_c$ is only determined by the pair $(A, B)$ \cite{MoYilin2008CDC}. However, an explicit expression of $\gamma_c$ is only available for some specific situations. For example, when $\mathrm{rank}(B)=1$, $\gamma_c=1-\frac{1}{\Pi_i|\lambda_i(A)|^2}$ and when $B$ is square and invertible, $\gamma_c=1-\frac{1}{\max_i |\lambda_i(A)|^2}$. For other cases, the critical value $\gamma_c$ can be obtained by solving a quasiconvex LMI optimization problem~\cite{SchenatoL2007ProcIEEE}.
\end{remark}

\begin{theorem}\label{thm.ExplicitSufficiency}
 Under Assumption~\ref{assumption:matrixAisUnstable} and~\ref{assump.identicalPacketLoss}, the MAS~\eqref{eq:AgentDynamics} is mean square consensusable by the protocol~\eqref{eq:ConsensusProtocol} if
    \begin{align}
     \gamma_1=\min\{q, 1-p\}\left[ 1-\left( \frac{\lambda_N-\lambda_2}{\lambda_N+\lambda_2}
      \right)^2 \right]>\gamma_c,
      \label{eq.sufficientCondition}
    \end{align}
    where $\gamma_c$ is given in Lemma~\ref{lemma:MAREsolvability}. Moreover, if~\eqref{eq.sufficientCondition} holds, an admissible control gain is given by
    \begin{align*}
     K=-\frac{2}{\lambda_2+\lambda_N}(B'PB)^{-1}B'PA,
    \end{align*}
    where $P$ is the solution to~\eqref{eq.MARE} with $\gamma=\gamma_1$.  
\end{theorem}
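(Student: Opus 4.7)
The plan is to reduce the consensusability condition in Theorem~\ref{thm.IffCondition}.1) to a single modified algebraic Riccati inequality by choosing a common Lyapunov certificate $P_{i,1} = P_{i,2} = P$ independent of $i$, together with the scalar-gain structure stated in the theorem. First I would verify the min--max identity already used inside the proof of Theorem~\ref{thm.NumericalSufficieny}: setting $K = k(B'PB)^{-1}B'PA$ with scalar $k$, a direct expansion of $(A + \lambda_i BK)'P(A + \lambda_i BK)$ gives
\begin{align*}
A'PA + (\lambda_i^2 k^2 + 2\lambda_i k)\,A'PB(B'PB)^{-1}B'PA,
\end{align*}
and the optimal choice $k = -2/(\lambda_2 + \lambda_N)$ yields $\max_{i=2,\dots,N}(\lambda_i^2 k^2 + 2\lambda_i k) = -c$, where $c = 1 - ((\lambda_N - \lambda_2)/(\lambda_N + \lambda_2))^2$.

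Next I would substitute the resulting bound $(A + \lambda_i BK)'P(A + \lambda_i BK) \le A'PA - c\,A'PB(B'PB)^{-1}B'PA$ into the two inequalities of Theorem~\ref{thm.IffCondition}.1) under the common certificate $P_{i,1} = P_{i,2} = P$. After cancelling $A'PA$ in each, the first inequality collapses to the MARE
\begin{align*}
P > A'PA - qc\,A'PB(B'PB)^{-1}B'PA,
\end{align*}
and the second to the analogous MARE with $qc$ replaced by $(1-p)c$. Since $A'PB(B'PB)^{-1}B'PA \ge 0$, the right-hand side of~\eqref{eq.MARE} is monotonically non-increasing in $\gamma$, so both inequalities are simultaneously implied by the single MARE with $\gamma = \gamma_1 = \min\{q, 1-p\}\,c$.

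Finally, Lemma~\ref{lemma:MAREsolvability} provides a solution $P > 0$ to this MARE precisely when $\gamma_1 > \gamma_c$, which is the hypothesis~\eqref{eq.sufficientCondition}. Theorem~\ref{thm.IffCondition}.1) is then satisfied with $P_{i,1} = P_{i,2} = P$ and the stated $K = -\frac{2}{\lambda_2+\lambda_N}(B'PB)^{-1}B'PA$, yielding mean-square consensusability. The main technical step is the min--max identity on the scalar coefficient together with the monotonicity observation that lets the two MAREs be merged into one, but both are either already available from the proof of Theorem~\ref{thm.NumericalSufficieny} or immediate; the remainder of the argument is a clean substitution into the Lyapunov-type inequalities of Theorem~\ref{thm.IffCondition}.
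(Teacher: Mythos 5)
Your proposal is correct and follows essentially the same route as the paper: fix $P_{i,1}=P_{i,2}=P$, use the min--max identity $\min_k\max_i(\lambda_i^2k^2+2\lambda_i k)=-c$ with $\check k=-2/(\lambda_2+\lambda_N)$ to reduce the two Lyapunov-type inequalities of Theorem~\ref{thm.IffCondition}.1) to the MAREs with parameters $qc$ and $(1-p)c$, merge them via monotonicity in $\gamma$ into the single MARE with $\gamma_1=\min\{q,1-p\}c$, and invoke Lemma~\ref{lemma:MAREsolvability}. No gaps.
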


\begin{proof}
If the~\eqref{eq.sufficientCondition} holds, in view of Lemma~\ref{lemma:MAREsolvability}, there exists a $P>0$ to~\eqref{eq.MARE} with $\gamma=\gamma_1$, such that
  \begin{align*}
  P&>A'PA-qcA'PB(B'PB)^{-1}B'PA,\\  
  P&>A'PA-(1-p)cA'PB(B'PB)^{-1}B'PA.
  \end{align*}
Since $-c= \max_i (\lambda_i^2\check{k}^2+2\lambda_i \check{k})$ with $\check{k}=-\frac{2}{\lambda_2+\lambda_N}$, we have
  \begin{align*}
  P&>A'PA+q(2\lambda_i \check{k}+\lambda_i^2 \check{k}^2)A'PB(B'PB)^{-1}B'PA,\\  
  P&>A'PA+(1-p)(2\lambda_i \check{k}+\lambda_i^2 \check{k}^2)A'PB(B'PB)^{-1}B'PA
    \end{align*}
    for all $i=2, \ldots, N$, which is the condition in Theorem~\ref{thm.IffCondition}.1) with
    \begin{gather*}
      P_{i,1}=P_{i,2}=P,\quad K=\check{k}(B'PB)^{-1}B'PA.
    \end{gather*}
    The proof is completed.
\end{proof}

\begin{remark}
    Theorem~\ref{thm.NumericalSufficieny} is obtained by letting $P_{i,1}=P_1$, $P_{i,2}=P_2$. Theorem~\ref{thm.ExplicitSufficiency} is obtained by letting $P_{i,1}=P_{i,2}=P$. Since the latter is more restrictive than the former. We can expect that Theorem~\ref{thm.ExplicitSufficiency}  is more restrictive than Theorem~\ref{thm.NumericalSufficieny}, which will be illustrated by a simulation example in the next subsection.
\end{remark}

In conjunction with the analytic sufficient consensusability condition in Theorem~\ref{thm.ExplicitSufficiency}, we also provide an explicit necessary consensusability condition as stated below. 

\begin{theorem} \label{thm.ExplicitNecessity}
 Under Assumption~\ref{assumption:matrixAisUnstable} and~\ref{assump.identicalPacketLoss},
the MAS~\eqref{eq:AgentDynamics} is mean square consensusable by the protocol~\eqref{eq:ConsensusProtocol} only if there exists $K$ such that
    \begin{gather}
      (1-q)^{\frac12}\rho(A)<1,\label{eq.explicitNecessityGeneral1}\\
      (1-p)^{\frac12}\rho(A+\lambda_i BK)<1 \label{eq.explicitNecessityGeneral2}
    \end{gather}
    for all $i=2, \ldots, N$. Moreover, when the agent is with single input, i.e., $m=1$, the MAS~\eqref{eq:AgentDynamics} is mean square consensusable by the protocol~\eqref{eq:ConsensusProtocol} only if
\begin{gather}
      (1-q)^{\frac12}\rho(A)<1, \label{eq.explicitNecessitySingleInput1}\\
(1-p)^{\frac{n}{2}} \det(A) \frac{\lambda_N-\lambda_2}{\lambda_N+\lambda_2}<1.\label{eq.explicitNecessitySingleInput2}
\end{gather}
\end{theorem}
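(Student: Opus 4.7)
The plan is to read off both sets of necessary conditions from the Markov jump linear system characterization already established in Theorem~\ref{thm.IffCondition}: mean square consensusability is equivalent to the simultaneous mean square stability of $\delta_i(t+1)=(A+\lambda_i\gamma(t)BK)\delta_i(t)$ for $i=2,\ldots,N$ under the two-state chain of Assumption~\ref{assump.identicalPacketLoss}. Every necessary condition will then be extracted from this stability with a fixed (but existing) $K$.

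For the general-case bounds \eqref{eq.explicitNecessityGeneral1}--\eqref{eq.explicitNecessityGeneral2}, I would introduce the conditional second-moment matrices $X_j(t)=\E{\delta_i(t)\delta_i(t)'\mathbf{1}_{\{\gamma(t)=j\}}}$ for $j\in\{0,1\}$, whose joint recursion is exactly the operator $\mathcal{H}_i$ appearing in Theorem~\ref{thm.IffCondition}.2). Since both $X_0(t)$ and $X_1(t)$ are positive semidefinite, discarding the nonnegative cross terms yields the dominations $X_0(t+1)\ge(1-q)AX_0(t)A'$ and $X_1(t+1)\ge(1-p)(A+\lambda_iBK)X_1(t)(A+\lambda_iBK)'$. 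Iterating these and taking initial data concentrated on state $0$ with $X_0(0)=vv'$, and then on state $1$ with $X_1(0)=vv'$, for an arbitrary vector $v$, mean square stability forces $(1-q)^t\|A^tv\|^2\to 0$ and $(1-p)^t\|(A+\lambda_iBK)^tv\|^2\to 0$ for every $v$, which via Gelfand's formula is exactly \eqref{eq.explicitNecessityGeneral1} and \eqref{eq.explicitNecessityGeneral2}.

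For the single-input refinement, I would start from \eqref{eq.explicitNecessityGeneral2} and invoke the elementary inequality $\rho(M)\ge|\det(M)|^{1/n}$ (the spectral radius dominates the geometric mean of the eigenvalue magnitudes) to upgrade \eqref{eq.explicitNecessityGeneral2} to $(1-p)^{n/2}|\det(A+\lambda_iBK)|<1$ for every $i$. Assumption~\ref{assumption:matrixAisUnstable} guarantees $\det(A)\neq 0$, and when $m=1$ the rank-one matrix determinant lemma gives $\det(A+\lambda_iBK)=\det(A)(1+\lambda_i\eta)$ with the scalar $\eta=KA^{-1}B$. Writing $C=(1-p)^{-n/2}/|\det(A)|$, the inequalities for $i=2$ and $i=N$ thus reduce to the pair $|1+\lambda_2\eta|<C$ and $|1+\lambda_N\eta|<C$.

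The main obstacle, and the crux of the argument, is eliminating $\eta$ from this pair of scalar bounds to obtain a condition involving only the problem data. My plan is to exploit the affine identity $1=\frac{\lambda_N(1+\lambda_2\eta)-\lambda_2(1+\lambda_N\eta)}{\lambda_N-\lambda_2}$; applying the triangle inequality together with the two bounds then yields $1<\frac{\lambda_N+\lambda_2}{\lambda_N-\lambda_2}C$, which rearranges directly to \eqref{eq.explicitNecessitySingleInput2} (the case $\lambda_2=\lambda_N$ being trivial). Finally, \eqref{eq.explicitNecessitySingleInput1} is inherited verbatim from \eqref{eq.explicitNecessityGeneral1}, completing the plan.
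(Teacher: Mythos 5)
Your proposal is correct and follows essentially the same route as the paper: both extract \eqref{eq.explicitNecessityGeneral1}--\eqref{eq.explicitNecessityGeneral2} by discarding the nonnegative coupling terms in the MJLS stability characterization of Theorem~\ref{thm.IffCondition} (the paper drops the positive semidefinite cross terms in the coupled Lyapunov inequalities of part 1) and invokes Lyapunov theory, while you drop them in the dual moment recursion of part 2) and invoke Gelfand's formula), and both obtain \eqref{eq.explicitNecessitySingleInput2} from \eqref{eq.explicitNecessityGeneral2} via the determinant / matrix-determinant-lemma argument of the cited Lemma 3.1, which you merely write out explicitly with a clean triangle-inequality elimination of $KA^{-1}B$. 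The only step worth tightening is that $(1-q)^t\|A^tv\|^2\to 0$ for all $v$ gives a priori only $(1-q)\rho(A)^2\le 1$; strictness follows by testing $v$ against the real and imaginary parts of a dominant eigenvector, or more directly from the decoupled inequality $P_{i,1}>(1-q)A'P_{i,1}A$.
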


\begin{proof}
If the MAS can achieve mean square consensus, in view of Theorem~\ref{thm.IffCondition}.1), we have that there exist $P_{i,1}>0$, $P_{i,2}>0$ and $K$ such that
\begin{gather*}
        P_{i,1}> (1-q) A' P_{i,1}A ,     \\
       P_{i,2} > (1-p) (A+ \lambda_i BK)' P_{i,2}(A+\lambda_iBK),
\end{gather*}
for all $i=2, \ldots, N$. Further from Lyapunov stability theory, we can obtain the necessary conditions~\eqref{eq.explicitNecessityGeneral1}, \eqref{eq.explicitNecessityGeneral2}.  

When the agent is with single input, following similar line  of argument as in the necessity proof of Lemma 3.1 in~\cite{YouKeyou2011TACconsensusability}, we can obtain the necessary condition~\eqref{eq.explicitNecessitySingleInput2}  from~\eqref{eq.explicitNecessityGeneral2}. The proof is completed. 
\end{proof}

\subsection{Critical Consensus Condition for Scalar Agent Dynamics}

When all the agents are with scalar dynamics, we can obtain a closed-form consensusability condition. The following lemma is needed in the proof of the main result and is stated first.

\begin{lemma}[\cite{XuLiang2017TCNS}]\label{lem:EquivalentConditios}
    Let $Q$ be defined in~\eqref{eq:twostateQ}; $D=\left[ \begin{smallmatrix}
            1& 0\\
            0 & \delta
        \end{smallmatrix}\right]$
    with $0<q, p, \delta<1$; $\lambda\in \mathbb{R}$, $|\lambda|\ge 1$. The
    following conditions are equivalent:
    \begin{enumerate}
      \item
        \begin{align*}
        \lambda^2 \rho (Q'D)<1,
        \end{align*}
      \item 
        \begin{gather}
            1-\lambda^2(1-q)>0, \label{eq:lemmacondition1} \\
            \lambda^2 \delta\left[ 1+\frac{p(\lambda^2-1)}{1-\lambda^2(1-q)}
            \right]<1. \label{eq:lemmacondition2}
        \end{gather}
    \end{enumerate}
     \end{lemma}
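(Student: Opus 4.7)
The plan is to reduce $\lambda^2\rho(Q'D)<1$ to the positivity of the leading principal minors of $I-\lambda^2 Q'D$, exploiting the fact that this matrix is entrywise nonnegative since $0<p,q,\delta<1$, and then to verify by direct algebra that those minor-positivity conditions coincide exactly with~\eqref{eq:lemmacondition1}--\eqref{eq:lemmacondition2}.

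First I would compute
\[
M:=\lambda^2 Q'D=\begin{bmatrix}\lambda^2(1-q) & \lambda^2 p\delta \\ \lambda^2 q & \lambda^2(1-p)\delta\end{bmatrix}\ge 0,
\]
and invoke the standard nonnegative-matrix / M-matrix characterization: for any entrywise nonnegative matrix $M$, $\rho(M)<1$ if and only if $I-M$ is a nonsingular M-matrix, which for a $2\times 2$ Z-matrix reduces to strict positivity of its two leading principal minors, namely
\[
1-\lambda^2(1-q)>0 \qquad \text{and} \qquad \det(I-M)>0.
\]
The first of these is exactly~\eqref{eq:lemmacondition1}, so it remains to match the determinant condition with~\eqref{eq:lemmacondition2}.

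Next I would expand
\[
\det(I-M)=\bigl(1-\lambda^2(1-q)\bigr)\bigl(1-\lambda^2(1-p)\delta\bigr)-\lambda^4 pq\delta,
\]
and use the identity $(1-q)(1-p)-pq=1-p-q$ to rewrite this as
\[
\det(I-M)=1-\lambda^2(1-q)-\lambda^2\delta(1-p)+\lambda^4\delta(1-p-q).
\]
Assuming~\eqref{eq:lemmacondition1} holds so that $1-\lambda^2(1-q)>0$, dividing the inequality $\det(I-M)>0$ by this positive quantity and rearranging yields, after a few lines of algebra, precisely~\eqref{eq:lemmacondition2}. Both directions of the equivalence fall out of this single calculation.

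The main obstacle will be pinning down the precise form of the nonnegative-matrix fact invoked: either one cites the M-matrix characterization directly (e.g.\ Berman and Plemmons), or one gives a short self-contained argument observing that the characteristic polynomial $\mu^2-\mathrm{tr}(M)\mu+\det(M)$ has nonnegative discriminant $(M_{11}-M_{22})^2+4M_{12}M_{21}\ge 0$ and therefore real eigenvalues, so that the Jury/Schur test for both roots lying inside the open unit disk collapses to exactly the same two positivity conditions. Beyond this step the proof is purely algebraic bookkeeping, and the hypothesis $|\lambda|\ge 1$ is never actually used in the equivalence itself—it only ensures that the ratio $p(\lambda^2-1)/(1-\lambda^2(1-q))$ appearing in~\eqref{eq:lemmacondition2} is well-defined and nonnegative when~\eqref{eq:lemmacondition1} holds.
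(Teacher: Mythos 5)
Your argument is correct. Note first that the paper does not prove this lemma at all --- it is imported by citation from~\cite{XuLiang2017TCNS} --- so there is no in-paper proof to compare against; your proposal supplies a genuine, self-contained justification. The core of your route is sound: $M=\lambda^2Q'D$ is entrywise nonnegative, $I-M$ is a $Z$-matrix, and the classical characterization (Berman--Plemmons) gives $\rho(M)<1$ iff the two leading principal minors $1-\lambda^2(1-q)$ and $\det(I-M)$ are positive; your expansion $\det(I-M)=1-\lambda^2(1-q)-\lambda^2\delta(1-p)+\lambda^4\delta(1-p-q)$ is right, and dividing by the positive quantity $1-\lambda^2(1-q)$ does recover~\eqref{eq:lemmacondition2} exactly, so both directions follow. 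Two small caveats. First, your fallback ``self-contained'' justification via the Jury test is stated too loosely: the Jury conditions for a monic quadratic are $p(1)>0$, $p(-1)>0$ and $|\det M|<1$, which are not literally the two leading-minor conditions; to make that version airtight you should instead use that the Perron root of a nonnegative matrix is a real eigenvalue dominating the diagonal entries, and observe that $1-M_{11}>0$ together with $\det(I-M)>0$ forces $1-M_{22}>0$ (since $(1-M_{11})(1-M_{22})>M_{12}M_{21}\ge 0$), hence $\operatorname{tr}(M)<2$, which rules out the ``both roots exceed $1$'' branch of $p(1)>0$. Second, your observation that $|\lambda|\ge 1$ is not needed for the equivalence is accurate; it is only relevant to how the lemma is applied under Assumption~\ref{assumption:matrixAisUnstable}.
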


Without loss of generality, for scalar agent dynamics, i.e., $n=m=1$, we let $A=a\in \mathbb{R}$, $B=1$, $K=k \in \mathbb{R}$. The main result is stated as follows.
\begin{theorem}
\label{thm.ScalarIffConditoin}
Under Assumption~\ref{assumption:matrixAisUnstable} and~\ref{assump.identicalPacketLoss}, the MAS~\eqref{eq:AgentDynamics} with scalar agent dynamics is mean square consensusable by the protocol~\eqref{eq:ConsensusProtocol} if and only if
    \begin{gather}
        (1-q)a^2<1, \label{eq.ScalarIffCondtion1}\\
        a^2\left(\frac{\lambda_N-\lambda_2}{\lambda_N+\lambda_2}\right)^2 \left[1+\frac{p(a^2-1)}{1-a^2(1-q)}\right]<1\label{eq.ScalarIffCondition2}.
    \end{gather}
\end{theorem}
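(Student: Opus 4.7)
The plan is to combine Theorem~\ref{thm.IffCondition} with Lemma~\ref{lem:EquivalentConditios} and then reduce the multi-agent problem to a single scalar min-max over the control gain $k$.

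First, I would invoke Theorem~\ref{thm.IffCondition} to reduce the consensusability problem to the simultaneous mean square stability of the $N-1$ scalar Markov jump linear systems $\delta_i(t+1)=(a+\lambda_i \gamma(t)k)\delta_i(t)$ for $i=2,\ldots,N$. For each scalar MJLS, a standard result for two-state Markov jump systems (e.g. Theorem 3.9 in~\cite{CostaO2005Book}) states that mean square stability is equivalent to $\rho(Q'D_i)<1$ with $D_i=\mathrm{diag}(a^2,(a+\lambda_i k)^2)$. Factoring $a^2$ out, this is $a^2\rho(Q'\mathrm{diag}(1,\delta_i))<1$ where $\delta_i=(a+\lambda_i k)^2/a^2$.

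Next I would apply Lemma~\ref{lem:EquivalentConditios} with $\lambda=a$ (permissible since $|a|\ge 1$ by Assumption~\ref{assumption:matrixAisUnstable}) and $\delta=\delta_i$. The lemma then translates mean square stability of the $i$-th MJLS into the two conditions
\begin{gather*}
1-a^2(1-q)>0,\\
(a+\lambda_i k)^2\left[1+\frac{p(a^2-1)}{1-a^2(1-q)}\right]<1.
\end{gather*}
The first condition is independent of both $i$ and $k$, so it is directly equivalent to~\eqref{eq.ScalarIffCondtion1}. For the second, since the bracketed factor is positive whenever~\eqref{eq.ScalarIffCondtion1} holds, the simultaneous satisfaction for $i=2,\ldots,N$ over a common $k$ is equivalent to
\begin{align*}
\min_{k}\max_{2\le i\le N}(a+\lambda_i k)^2<\frac{1}{1+p(a^2-1)/(1-a^2(1-q))}.
\end{align*}
The inner min-max is a one-dimensional Chebyshev-type problem whose optimal gain equalises the two extremes, giving $k^\star=-2a/(\lambda_2+\lambda_N)$ with optimal value $a^2\bigl((\lambda_N-\lambda_2)/(\lambda_N+\lambda_2)\bigr)^2$. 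Substituting yields exactly~\eqref{eq.ScalarIffCondition2}, and this equivalence gives both directions.

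The main technical nuisance I anticipate is the domain restriction $0<\delta<1$ in Lemma~\ref{lem:EquivalentConditios}: if one chooses $k$ with $a+\lambda_i k=0$ for some $i$ then $\delta_i=0$ and the lemma does not apply verbatim. I would handle this by either (i) observing that the minimising gain $k^\star$ makes $(a+\lambda_2 k^\star)^2=(a+\lambda_N k^\star)^2=a^2((\lambda_N-\lambda_2)/(\lambda_N+\lambda_2))^2>0$, so all $\delta_i>0$ at the optimiser and no issue arises for sufficiency; and (ii) for necessity, noting that if some $k$ achieves simultaneous mean square stability with $\delta_j=0$, a small perturbation yields $\delta_j\in(0,1)$ while preserving strict stability by continuity of the spectral radius, so the lemma still delivers the required inequalities. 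The upper bound $\delta_i<1$ likewise follows because any stabilising $k$ forces $(a+\lambda_i k)^2<1\le a^2$ through the second inequality above.
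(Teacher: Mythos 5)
Your proof follows essentially the same route as the paper's: reduce via Theorem~\ref{thm.IffCondition}.2) to $a^2\rho\left(Q'\,\mathrm{diag}\left(1,(a+\lambda_ik)^2/a^2\right)\right)<1$ for all $i$, translate each condition through Lemma~\ref{lem:EquivalentConditios}, and finish with the Chebyshev min--max identity $\min_k\max_i(a+\lambda_ik)^2=a^2\left(\tfrac{\lambda_N-\lambda_2}{\lambda_N+\lambda_2}\right)^2$. Your extra care about the lemma's domain restriction $0<\delta<1$ goes beyond the paper (which silently ignores it) and is welcome; the only quibble is that deriving $\delta_i<1$ for a stabilizing $k$ ``through the second inequality above'' is mildly circular, since that inequality is the lemma's conclusion --- the clean justification is that the Jury conditions for the nonnegative matrix $Q'D_i$ directly force $\delta_i<1$ whenever $\rho(Q'D_i)<1$ and $q>0$.
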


\begin{proof}
In view of Theorem~\ref{thm.IffCondition}.2), for scalar agent dynamics, the MAS~\eqref{eq:AgentDynamics} is mean square consensusable by the protocol~\eqref{eq:ConsensusProtocol} if and only if there exists $k$ such that 
\begin{align*}
 a^2\rho\left( Q'\times
  \begin{bmatrix}
      1 & 0\\
      0 & \frac{(a+\lambda_ik)^2}{a^2}
  \end{bmatrix}
\right)<1,
\end{align*}
for all $i=2, \ldots, N$. Further from Lemma~\ref{lem:EquivalentConditios}, a necessary and sufficient consensus condition is that if there exists $k$ such that for all $i=2, \ldots, N$.
     \begin{gather}
        (1-q)a^2<1,\label{eq.scalarIff1Temp} \\
       \left(a+\lambda_ik\right)^2 \left[1+\frac{p(a^2-1)}{1-a^2(1-q)}\right]<1. \label{eq.simultaneousRequirement}
    \end{gather}
    Since~\eqref{eq.simultaneousRequirement} holds for all $i$, we have that
    \begin{align*}
     \min_k\max_i(a+\lambda_ik)^2  \left[1+\frac{p(a^2-1)}{1-a^2(1-q)}\right]<1.
    \end{align*}
    Moreover, since 
    \begin{align*}
 \min_k\max_i(a+\lambda_ik)^2=a^2\left(\frac{\lambda_N-\lambda_2}{\lambda_N+\lambda_2}\right)^2,
    \end{align*}
    we can obtain the necessary and sufficient consensusability condition~\eqref{eq.ScalarIffCondtion1}, \eqref{eq.ScalarIffCondition2} from \eqref{eq.scalarIff1Temp}, \eqref{eq.simultaneousRequirement}. The proof is completed. 
\end{proof}

Interestingly, we can show that when the agent dynamics is scalar, the sufficient condition indicated in Theorem~\ref{thm.NumericalSufficieny} is also necessary.
Theorem \ref{thm.NumericalSufficieny} is equivalent to check the solvability of~\eqref{eq.Suffi1ForAllI} and \eqref{eq.Suffi2ForAllI}. For scalar systems with $A=a\in \mathbb{R}, B=1$,~\eqref{eq.Suffi1ForAllI} and~\eqref{eq.Suffi2ForAllI} change to
   \begin{gather}
     [1-(1-q)a^2]P_1>qa^2(1-c)P_2, \label{eq.ScalarExistenceCondition1}\\
     [1-(1-p)(1-c)a^2]P_2>pa^2P_1. \label{eq.ScalarExistenceConditoin2}
   \end{gather}
We can show that the necessary and sufficient condition to guarantee the solvability of the above
inequality is given by~\eqref{eq.ScalarIffCondtion1} and
\eqref{eq.ScalarIffCondition2}. Since $P_1>0$ and $qa^2(1-c)P_2>0$, we have from~\eqref{eq.ScalarExistenceCondition1} that $1-(1-q)a^2>0$, which gives~\eqref{eq.ScalarIffCondtion1}. Let $\theta=1-c=\left( \frac{\lambda_N-\lambda_2}{\lambda_N+\lambda_2}
\right)^2$. We can obtain a lower bound of $P_1$ from~\eqref{eq.ScalarExistenceCondition1} and
substitute this bound into~\eqref{eq.ScalarExistenceConditoin2} to obtain  
\begin{align*}
     [1-(1-p)\theta a^2]P_2>pa^2 \frac{qa^2\theta P_2}{[1-(1-q)a^2]}.
\end{align*}
Since $P_2>0$, we further have that
\begin{align*}
     [1-(1-p)\theta a^2][1-(1-q)a^2]-pqa^4\theta>0,
\end{align*}
which implies
\begin{align*}
a^2 \theta[(1-p)-a^2 (1-p-q)]<1-a^2(1-q).
\end{align*}
Dividing both sides by $1-a^2(1-q)$, we can obtain~\eqref{eq.ScalarIffCondition2}.

In contrast to the tightness of Theorem~\ref{thm.NumericalSufficieny} for scalar systems, Theorem~\ref{thm.ExplicitSufficiency} is generally not necessary. Consider the case that $A=2$, $B=1$, $\lambda_2=2$ and $\lambda_N=3$, then the tolerable $(p, q)$ from Theorem~\ref{thm.ScalarIffConditoin} are given by
   \begin{gather*}
       q>\frac34, \quad p<7\times (q-\frac34).
   \end{gather*}
   While the sufficiency indicated by Theorem~\ref{thm.ExplicitSufficiency}, is given by
   \begin{gather*}
       q> \frac{25}{32}, \quad p< \frac{7}{32}. 
   \end{gather*}
   The tolerable failure rate and recovery rate are plotted in Fig.~\ref{fig.scalarCompare}. It is clear that the result in Theorem~\ref{thm.ExplicitSufficiency} is conservative in the case of scalar agent dynamics. 
\begin{figure}[htpb] \centering
    \includegraphics[width=0.4\textwidth]{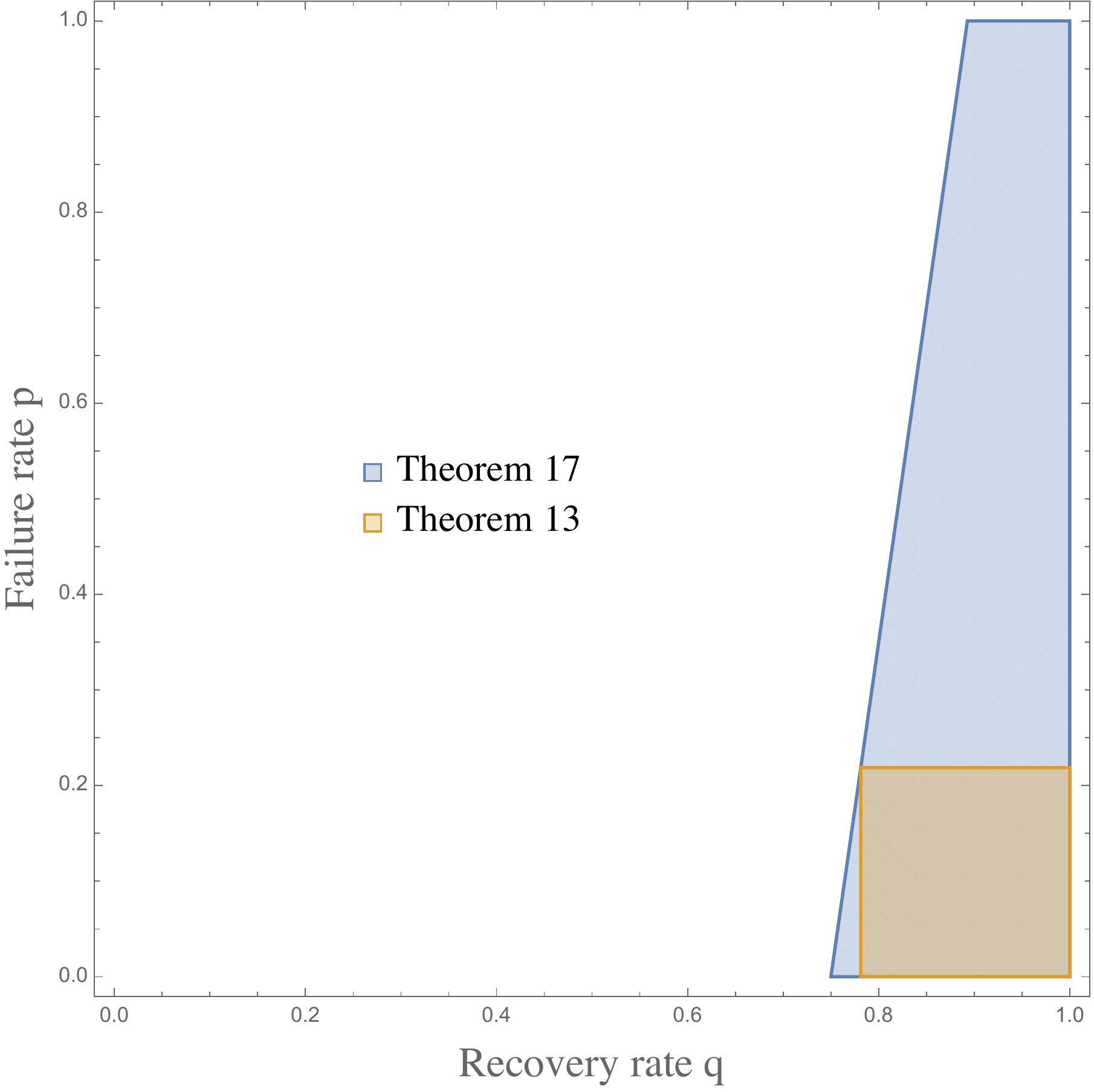}\\
    \caption{Tolerable failure rate and recovery rate}
    \label{fig.scalarCompare}
\end{figure}

The assumption of identical channel loss distributions is somewhat restrictive and less practical. However, it is the simplest case in studying the consensus problem over Markovian packet loss channels and is expected to shed light on solutions to more general nonidentical cases, which is studied in the subsequent section.

\section{Nonidentical Markovian Packet Loss\label{sec.NonidenticalMarkovLoss}}

In the presence of nonidentical packet losses, the consensus error dynamics of $\delta$ is given by $\delta (t+1)=\left( I \otimes A+ \mathcal{L}(t)\otimes BK \right)\delta (t) $ with $\mathcal{L}(t)$ modeling both the communication topology and the packet losses. Since the packet loss is coupled with the communication topology in $\mathcal{L}(t)$, the analysis of the mean square consensus is difficult. Therefore, the edge Laplaican~\cite{ZelazoD2011TAC} is used to model the consensus error dynamics as in~\cite{XuLiang2016Automatica}, which allows to separate the lossy effect from the network topology to facilitate the consensusability analysis by building dynamics on edges rather than on vertexes.

The following graph definitions are needed in introducing the edge Laplacian. A virtual orientation of the edge in an undirected graph is an assignment of directions to the edge $(i,j)$ such that one vertex is chosen to be the initial node and the other to be the terminal node. The incidence matrix $E$ for an oriented graph $\mathcal{G}$ is a $\{0, 1, -1\}$-matrix with rows and columns indexed by vertices and edges of $\mathcal{G}$, respectively, such that
\begin{equation*}
{[E]}_{ik}=\left\{
\begin{aligned}
+1, & \;\;\text{if $i$ is the initial node of edge $k$}\\
-1, & \;\;\text{if $i$ is the terminal node of edge $k$}\\
0, & \;\;\text{otherwise }
\end{aligned}
\right.
\end{equation*}
The graph Laplacian $\mathcal{L}$ and edge Laplacian $\mathcal{L}_e$ can be constructed from the incidence matrix respectively as $ \mathcal{L}=EE' $, $\mathcal{L}_e=E'E $~\cite{ZelazoD2011TAC}.

Define the state on the $i$-th edge as $ z_i=x_j-x_k $, with $j,k$ representing the initial node and the terminal node of the $i$-th edge, respectively. Assume that the packet losses on the same edge are equal, i.e., $\gamma_{jk}=\gamma_{kj}$, which make sense in practical applications~\cite{DeyS2009Automatica}. Following the definition of incidence matrix, the controller~\eqref{eq:ConsensusProtocol} can be alternatively represented as
\begin{align*}
 u_j(t)=K\sum_{k=1}^{l}e_{jk}\zeta_k(t)z_k(t),
\end{align*}
where $l$ is the total number of edges in $\mathcal{G}$, $e_{jk}$ is the $jk$-th element of $E$ and $\zeta_k$ denotes the packet loss effect on the $k$-th edge, i.e., $\zeta_k=\gamma_{ij}$ where $i, j$ are the initial node and terminal node of the $k$-the edge. If we define $z=[z_1',z_2',\ldots,z_{l}']'$, then following similar steps as in~\cite{XuLiang2016Automatica}, the closed-loop dynamics on edges can be calculated as
\begin{equation}
\label{dynamicsOfZ}
z(t+1)=\left( I\otimes A +\mathcal{L}_e \zeta(t)\otimes BK \right)z(t)
\end{equation}
with $\zeta(t)=\mathrm{diag}(\zeta_1(t),\zeta_2(t),\ldots,\zeta_{l}(t))$.

With appropriate indexing of edges, we can write the incidence matrix $E$ as $E=[E_{\tau}, E_c] $, where edges in $E_\tau$ are on a spanning tree and edges in $E_c$ complete cycles in $\mathcal{G}$. We further have that when $\mathcal{G}$ is connected, there exists a matrix $T$, such that $E_c=E_\tau T$~\cite{ZelazoD2011TAC}. Moreover, with such indexing of edges, we can decompose the edge state $z$ as $z=[z_\tau', z_c']'$,  where $z_\tau$ is  the edge state on the spanning tree and $z_c$ is the remaining edge state. Besides, it is straightforward to verify that $z_c= (T'\otimes I)
z_\tau$, since $z=[z_\tau', z_c']'=(E'\otimes I)x=([E_\tau, E_c]'\otimes I)x$ and $E_c=E_\tau T$. Let $M=E_\tau'E$ and $R=[I, T]$, we have that
\begin{align}
   &z_{\tau}(t+1)=(I\otimes A) z_{\tau}(t) +((E_{\tau}'E_{\tau}\zeta_\tau(t))\otimes (BK))
    z_{\tau}(t) \nonumber \\
  &  \quad + ((E_{\tau}'E_{c}\zeta_{c}(t))\otimes (BK)) z_{c}(t) \nonumber \\
  &=(I\otimes A+ (E_{\tau}'E_{\tau}\zeta_{\tau}(t)+E_{\tau}'E_c\zeta_c(t)T')\otimes
    (BK))z_{\tau}(t) \nonumber \\
    & = (I\otimes A + (M \zeta(t) R')\otimes (BK))z_{\tau}(t),\label{eq.reducedTreeEdgeDynamics}
\end{align}
where $\zeta_\tau, \zeta_c$ represent the packet losses on tree edges and cycle edges, respectively. The MAS can achieve mean square consensus if and only if~\eqref{eq.reducedTreeEdgeDynamics} is mean square stable.

The possible sample space of $\zeta(t)$ is $\Phi=\{\Lambda_0, \ldots, \Lambda_{2^l-1}\}$, where the $i$-th element $\Lambda_i$ is $\Lambda_i=\mathrm{diag}(\eta_1, \ldots, \eta_l)$ with $\eta_j \in \{0,1\}$, $j=1, \ldots, l$, being the $j$-th component of the binary expansion of $i$, i.e., $i=\eta_l2^{l-1}+\ldots+\eta_12^0$. We make the following assumptions for the packet loss matrix $\zeta(t)$.

\begin{assumption}
    \label{assump:nonIdenticalLoss}
The packet loss process $\{\zeta(t)\}_{t\ge 0}$ is a time-homogeneous Markov stochastic process, which has $o$ states $\{\Gamma_1, \ldots, \Gamma_o\}$, where $\Gamma_i\in \Phi$. The probability transition matrix $Q$ is an $o\times o$ matrix with the $ij$-th element being $p_{ij}$. 
\end{assumption}

\begin{remark}
It is possible that certain outcomes in $\Phi$ are unlikely to happen. For example, if two agents are close to each other, the communication between them can be reliable. It is unlikely that the communication link would undergo packet losses. In such cases, the sample space of $\zeta(t)$ would be a subset of $\Phi$. Therefore, in Assumption~\ref{assump:nonIdenticalLoss}, we use $o$ to denote the carnality of the actual sample space of $\zeta(t)$, which might be smaller than $2^{l}$.         
\end{remark}

Therefore~\eqref{eq.reducedTreeEdgeDynamics} is a Markov jump linear system. In view of Theorem 3.9 in~\cite{CostaO2005Book}, we have the following consensus result.

\begin{theorem}\label{thm:NonidentcalIff}
Under Assumption~\ref{assumption:matrixAisUnstable} and~\ref{assump:nonIdenticalLoss}, the MAS~\eqref{eq:AgentDynamics} is mean square consensusable by the protocol~\eqref{eq:ConsensusProtocol} if and only if either of the following conditions holds, where $\mathcal{S}_i(K)= (I\otimes A+M\Gamma_iR' \otimes BK)$

\begin{enumerate}
  \item there exist $P_i>0$, $i=1, \ldots, o$ and $K$ such that
\begin{align*}
 P_i>\sum_{j=1}^{o}p_{ij}\mathcal{S}_j(K)' P_j\mathcal{S}_j(K)
\end{align*}
for all $i=1, \ldots, o$. 
\item there exists $K$ such that
  \begin{align*}
    \rho\left( (Q'\otimes I)\mathrm{diag}\left(\mathcal{S}_i(K)\otimes \mathcal{S}_i(K) \right)  \right) <1.
  \end{align*}
\end{enumerate}
\end{theorem}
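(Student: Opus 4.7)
The plan is that this theorem is essentially a corollary of the edge-Laplacian reformulation carried out in the preceding paragraphs together with a direct invocation of the stability characterization of Markov jump linear systems (Theorem 3.9 in \cite{CostaO2005Book}). So the work has largely been done upstream of the statement; the proof just needs to glue the pieces together and identify the modes of the MJLS with the matrices $\mathcal{S}_i(K)$.

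First I would observe that, by the construction leading to \eqref{eq.reducedTreeEdgeDynamics}, the consensus error on the spanning-tree edges obeys $z_\tau(t+1) = (I\otimes A + (M\zeta(t) R')\otimes BK)\, z_\tau(t)$. Since $z = [z_\tau', z_c']' = [I, T']'\otimes I\, z_\tau$, every pairwise difference $x_i - x_j$ is a linear combination of the entries of $z_\tau$, so mean square consensus of the MAS \eqref{eq:AgentDynamics} is equivalent to $\lim_{t\to\infty}\E{z_\tau(t)z_\tau(t)'} = 0$, i.e., to mean square stability of \eqref{eq.reducedTreeEdgeDynamics}. Under Assumption \ref{assump:nonIdenticalLoss}, the switching process $\zeta(t)$ takes values in $\{\Gamma_1,\ldots,\Gamma_o\}$ and is a time-homogeneous Markov chain with transition matrix $Q = [p_{ij}]$; consequently, when $\zeta(t) = \Gamma_i$ the system matrix of \eqref{eq.reducedTreeEdgeDynamics} is precisely $\mathcal{S}_i(K) = I\otimes A + M\Gamma_iR'\otimes BK$, and \eqref{eq.reducedTreeEdgeDynamics} is a standard discrete-time MJLS with $o$ modes.

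With this identification, I would apply Theorem 3.9 in \cite{CostaO2005Book} verbatim. That theorem states that such an MJLS is mean square stable if and only if there exist matrices $P_i > 0$, $i=1,\ldots,o$, solving the coupled Lyapunov inequalities $P_i > \sum_{j=1}^o p_{ij}\,\mathcal{S}_j(K)' P_j \mathcal{S}_j(K)$, which is equivalent to $\rho\bigl((Q'\otimes I)\,\mathrm{diag}(\mathcal{S}_i(K)\otimes \mathcal{S}_i(K))\bigr) < 1$. Quantifying over the design variable $K$ (existence of a common $K$ making either condition hold) yields conditions 1) and 2) in the theorem statement, completing the proof.

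The main potential obstacle is purely bookkeeping rather than mathematical: one must make sure that the indexing convention of transition probabilities used in \cite{CostaO2005Book} (i.e., whether $p_{ij}$ denotes the probability of jumping from state $i$ to state $j$ or vice versa) matches the convention adopted in Assumption \ref{assump:nonIdenticalLoss}, so that the Lyapunov-inequality form appearing in condition 1) agrees with the reference's formulation and the Kronecker sum in condition 2) uses $Q'$ (rather than $Q$). Once these conventions are aligned, no further calculation is needed.
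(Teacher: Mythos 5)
Your proposal matches the paper's own (implicit) argument exactly: the paper likewise reduces mean square consensus to mean square stability of the spanning-tree edge dynamics~\eqref{eq.reducedTreeEdgeDynamics}, identifies it as an MJLS with modes $\mathcal{S}_i(K)$, and invokes Theorem 3.9 of \cite{CostaO2005Book} to obtain the coupled Lyapunov and spectral-radius characterizations. Your additional remark that all pairwise differences are recoverable from $z_\tau$ (so stability of the reduced dynamics suffices) is a correct and worthwhile piece of bookkeeping that the paper leaves implicit.
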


We can show that the consensus criterion in Theorem~\ref{thm:NonidentcalIff}.1) is equivalent to a feasibility  problem with BMI constraints. 
Therefore, checking the conditions in Theorem~\ref{thm:NonidentcalIff} are generally not easy. In the following a numerically easy testable condition in terms of the feasibility of LMIs are proposed.  

\begin{theorem}
Under Assumption~\ref{assumption:matrixAisUnstable} and~\ref{assump:nonIdenticalLoss}, the MAS~\eqref{eq:AgentDynamics} is mean square consensusable by the protocol~\eqref{eq:ConsensusProtocol} if there exists $\kappa\in \mathbb{R}$ such that the following LMIs are feasible, 
    \begin{align}
      \label{eq:NonidenticalLMISuff}
  \begin{bmatrix}
      -I & \kappa V_i'\\
      \kappa V_i & \kappa N_i+\gamma_c I
  \end{bmatrix}<0
\end{align}
for all $i=1, \ldots, o$, where $\gamma_c$ is given in Lemma~\ref{lemma:MAREsolvability},   $N_i=\sum_{j=1}^{o}p_{ij}( R\Gamma_jM' + M\Gamma_jR' )$, $M_i=  \sum_{j=1}^{o} p_{ij}  R\Gamma_jM'M\Gamma_jR'$ and $V_i$ is the Cholesky decomposition of $M_i$, i.e., $M_i=V_iV_i'$. Moreover, if~\eqref{eq:NonidenticalLMISuff} is satisfied, a control gain is given by $K=\kappa (B'PB)^{-1}B'PA$ where $P$ is the solution of~\eqref{eq.MARE} with $\gamma=\min_i \lambda_{\min}(-\kappa N_i-\kappa^2M_i)$.    
\end{theorem}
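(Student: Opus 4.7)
The strategy is to invoke Theorem~\ref{thm:NonidentcalIff}.1) with the uniform ansatz $P_i = I \otimes P$ for every Markov state $i$, where $P > 0$ is supplied by Lemma~\ref{lemma:MAREsolvability} as a solution of the modified Riccati inequality~\eqref{eq.MARE}. With the proposed gain $K = \kappa (B'PB)^{-1}B'PA$ and this uniform ansatz, the plan is to collapse the $o$ coupled Lyapunov inequalities in Theorem~\ref{thm:NonidentcalIff}.1) to a single scalar-parameterised Riccati inequality in $P$, and then to recognise via Schur complement that the LMI~\eqref{eq:NonidenticalLMISuff} is precisely what certifies the threshold condition $\gamma > \gamma_c$.

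The first step is a Kronecker expansion. Abbreviating $\Phi := A'PB(B'PB)^{-1}B'PA$ and substituting $\mathcal{S}_j(K) = I \otimes A + M\Gamma_j R' \otimes BK$ into $\sum_{j=1}^{o} p_{ij} \mathcal{S}_j(K)'(I\otimes P)\mathcal{S}_j(K)$, and using $(X\otimes Y)(U\otimes V) = XU\otimes YV$ together with $\sum_j p_{ij} = 1$, I expect four groups of terms. The constant term is $I \otimes A'PA$. The two cross terms collapse because the chosen $K$ gives $A'PBK = K'B'PA = \kappa\Phi$, so they fuse into $\kappa \bigl[\sum_j p_{ij}(M\Gamma_j R' + R\Gamma_j M')\bigr]\otimes\Phi = \kappa N_i \otimes \Phi$. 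The quadratic term becomes $\kappa^2 \bigl[\sum_j p_{ij} R\Gamma_j M'M\Gamma_j R'\bigr]\otimes\Phi = \kappa^2 M_i \otimes \Phi$. Hence the inequality demanded by Theorem~\ref{thm:NonidentcalIff}.1) reduces, for every $i$, to
\begin{align*}
I \otimes (P - A'PA) > (\kappa N_i + \kappa^2 M_i)\otimes\Phi.
\end{align*}

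The second step reduces this to a single MARE. Set $\gamma := \min_i \lambda_{\min}(-\kappa N_i - \kappa^2 M_i)$, so that $-(\kappa N_i + \kappa^2 M_i) \ge \gamma I$ for every $i$. Because $\Phi \ge 0$, tensoring preserves the inequality: $(\kappa N_i + \kappa^2 M_i)\otimes\Phi \le -\gamma I \otimes \Phi$. Thus it suffices to establish $I \otimes (P - A'PA + \gamma\Phi) > 0$, i.e., the strict MARE~\eqref{eq.MARE}, and Lemma~\ref{lemma:MAREsolvability} delivers a solution $P > 0$ provided $\gamma > \gamma_c$. Finally, applying the Schur complement to~\eqref{eq:NonidenticalLMISuff} about the $(1,1)$-block $-I$ (already negative definite) shows that its feasibility is equivalent to $\kappa^2 V_i V_i' + \kappa N_i + \gamma_c I < 0$, i.e., $\kappa^2 M_i + \kappa N_i + \gamma_c I < 0$, which is exactly $\lambda_{\min}(-\kappa N_i - \kappa^2 M_i) > \gamma_c$ for every $i$, hence $\gamma > \gamma_c$.

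The only real obstacle is careful bookkeeping. One must note that the two cross terms combine symmetrically only because the scalar $\kappa$ and the uniform ansatz $P_i = I\otimes P$ force $A'PBK$ and $K'B'PA$ to coincide, so that they fuse into $\kappa N_i \otimes \Phi$ with the symmetric $N_i$; one must also invoke $\Phi \ge 0$ to pass the eigenvalue bound through the Kronecker product. Strictness of the final inequality is inherited from the strict MARE rather than from the Kronecker step (which is only non-strict). Beyond these points, the proof is a direct chain of Theorem~\ref{thm:NonidentcalIff}.1), a Schur complement, and Lemma~\ref{lemma:MAREsolvability}.
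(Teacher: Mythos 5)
Your proof is correct and follows essentially the same route as the paper: the uniform ansatz $P_i = I\otimes P$ in Theorem~\ref{thm:NonidentcalIff}.1), reduction to the modified Riccati inequality of Lemma~\ref{lemma:MAREsolvability} with $\gamma=\min_i\lambda_{\min}(-\kappa N_i-\kappa^2 M_i)$, and a Schur complement to read off $\gamma>\gamma_c$ from~\eqref{eq:NonidenticalLMISuff}. The only (immaterial) difference is that you pass the bound $\kappa N_i+\kappa^2 M_i\le-\gamma I$ through the Kronecker product directly using $\Phi\ge 0$, whereas the paper first diagonalizes $\kappa N_i+\kappa^2 M_i$ by an orthogonal matrix and conjugates back.
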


\begin{proof}
If~\eqref{eq:NonidenticalLMISuff} holds, there exists $\kappa$ such that $\kappa N_i +\kappa^2M_i<-\gamma_c I$ for all $i=1, \ldots, o$. Since $\kappa N_i +\kappa^2M_i$ is real and symmetric, it is diagonalizable by an orthogonal matrix $\Psi$, i.e., $\Psi'(\kappa N_i +\kappa^2M_i)\Psi= \Upsilon$ and $\Upsilon$ is diagonal. Then we have that $\Upsilon< -\gamma_cI $. In view of Lemma~\ref{lemma:MAREsolvability}, we can find $P>0$ such that
\begin{align*}
 I\otimes P>I\otimes A'PA+ \Upsilon \otimes A'PB(B'PB)^{-1}B'PA. 
\end{align*}
Left and right multiply the above inequality with $\Psi\otimes I$ and $\Psi' \otimes I$, we have that
\begin{align*}
 I\otimes P>I\otimes A'PA+ (\kappa N_i +\kappa^2M_i) \otimes A'PB(B'PB)^{-1}B'PA. 
\end{align*}
From the definitions of $N_i$ and $M_i$ and the relation that $\sum_{j=1}^op_{ij}=1$, we further have that
\begin{multline*}
 I\otimes P>\sum_{j=1}^{o}p_{ij}(I\otimes A'PA+(\kappa R\Gamma_jM' + \kappa M\Gamma_jR' \\+\kappa^2 R\Gamma_jM'M\Gamma_jR')\otimes A'PB(B'PB)^{-1}B'PA), 
\end{multline*}
which is the sufficient condition given in Theorem~\ref{thm:NonidentcalIff}.1) with  $P_1=\ldots=P_{o}=I\otimes P$ and $K=\kappa (B'PB)^{-1}B'PA$. The proof is completed. \end{proof}



\begin{remark}
   This paper only discusses the consensusability  problem over undirected graphs. For the consensusability problem with directed graphs, the compressed edge Laplacian proposed in~\cite{XuLiang2018AutomaticaAccepted} can be used to model the consensus error dynamics. Then following similar derivations as in this section, consensus conditions over directed graphs in the presence Markovian packet losses can be obtained.   
\end{remark}

\section{Numerical Simulations \label{sec.Simulations}}

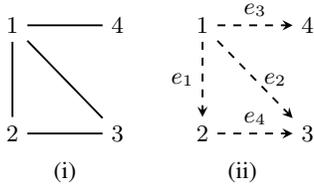
\begin{figure} \centering
    \begin{tabular}{cc}
      \begin{tikzpicture} [line width=0.7pt,>=stealth]
          \node (node1) {$1$};
          \node (node2) [below =of node1] {$2$};
          \node (node3) [right =of node2] {$3$};
          \node (node4) [right =of node1] {$4$};
          \path (node1) edge  (node2);
          \path (node1) edge  (node3);
          \path (node1) edge  (node4);
          \path (node3) edge  (node2);
      \end{tikzpicture}&
                         \begin{tikzpicture} [line width=0.7pt,>=stealth]
                             \node (node1) {$1$};
                             \node (node2) [below =of node1] {$2$};
                             \node (node3) [right =of node2] {$3$};
                             \node (node4) [right =of node1] {$4$};
                             \path[dashed, ->] (node1) edge node [left] {$e_1$} (node2);
                             \path[dashed, ->] (node1) edge node [right] {$e_2$} (node3);
                             \path[dashed, ->] (node1) edge node [above] {$e_3$} (node4);
                             \path[dashed, ->] (node2) edge node [above] {$e_4$} (node3);
                         \end{tikzpicture} 
      \\ (i) & (ii) 
    \end{tabular}
    \caption{Communication graphs used in simulations: (i) an undirected graph (ii)
      applying an orientation to edges in (i)}\label{fig.directedGraphsInSimulation}
\end{figure}

In this section, simulations are conducted to verify the derived results. In simulations, agents are assumed to have system parameters
\begin{gather*}
 A= \begin{bmatrix}
      1.1830 &  -0.1421 &  -0.0399\\
    0.1764  &  0.8641 &  -0.0394\\
    0.1419  & -0.1098 &   0.9689
\end{bmatrix},
B= \begin{bmatrix}
 0.1697  &  0.3572\\
    0.5929 &   0.5165\\
    0.1355 &   0.9659
\end{bmatrix}.
\end{gather*}
The initial state of each agent is uniformly and randomly generated from the interval $(0, 0.5)$. We assume that there are four agents and the undirected communication topology among agents is given in Fig.~\ref{fig.directedGraphsInSimulation}.(i). We first consider the consensus with identical Markovian packet losses. The Markov packet losses in transmission channels are assumed to have parameters $p=0.2$, $q=0.7$. With such configurations, the LMIs in Theorem~\ref{thm.NumericalSufficieny} are feasible and an admissible  control parameter is given by 
\begin{align*}
 K= \begin{bmatrix*}
 2.0646 &  -1.3157 &   -0.0939 \\
   -0.5767 &    0.2947   &-0.3324
  \end{bmatrix*}.
\end{align*}
 
 The simulation results are presented by averaging over 1000 runs. Mean square consensus errors for agent 1 are plotted in Fig.~\ref{fig.simulation3ResultAgent1ConsensusError}, which shows that the mean square consensus is achieved.

\begin{figure}[!htpb] \centering
    \includegraphics[width=0.4\textwidth]{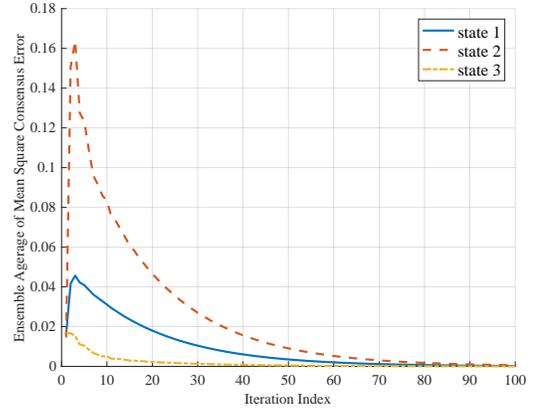}
    \caption{Mean square consensus error for agent 1 under identical packet losses}\label{fig.simulation3ResultAgent1ConsensusError}
\end{figure}

Secondly, we consider the consensus over nonidentical Markovian packet loss networks. We index the edges and apply a virtual orientation to each edge as in Fig.~\ref{fig.directedGraphsInSimulation}.(ii). Denote the packet loss processes in these edges as $\zeta_1(t), \zeta_2(t), \zeta_3(t), \zeta_4(t)$. Suppose the time-homogeneous Markov packet loss process $\{\zeta(t)\}_{t\ge0}$ with $\zeta(t)=\mathrm{diag}(\zeta_1(t), \zeta_2(t), \zeta_3(t), \zeta_4(t))$ has three states
 $\Gamma_1=\mathrm{diag}(1,0,1,0)$, $\Gamma_2=\mathrm{diag}(0,1,0,1)$, $\Gamma_3=\mathrm{diag}(1,1,1,1)$ and is with the probability transition matrix
\begin{align*}
 Q= \begin{bmatrix}
 0.3811 &   0.1446 &    0.4743\\
    0.2445 &    0.5121 &    0.2434\\
    0.5390 &    0.0215 &    0.4395
  \end{bmatrix}.
\end{align*}
With such settings, we can show that~\eqref{eq:NonidenticalLMISuff} is feasible, and an admissible control gain is given by
\begin{align*}
  K=
  \begin{bmatrix}
    1.7394  & -1.3873 &   0.0771\\
   -0.2133    &0.2212   &-0.5269
  \end{bmatrix}.
\end{align*}
The simulation results are presented by averaging over 1000 runs. The consensus error for agent $1$ is given below, which shows that the mean square consensus is achieved.

\begin{figure}[!htpb] \centering
    \includegraphics[width=0.4\textwidth]{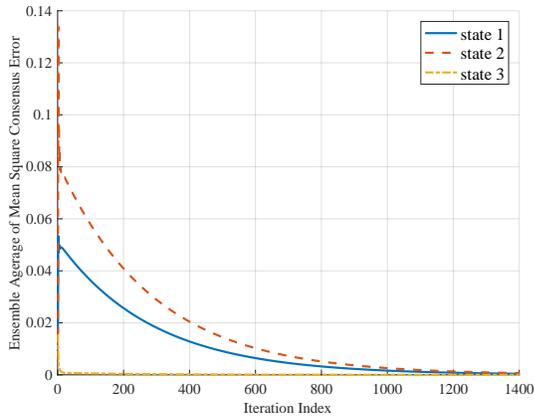}
    \caption{Mean square consensus error for agent 1 under nonidentical packet losses}\label{fig.simulationNonIdenticalResultAgent1ConsensusError}
\end{figure}

\section{Conclusions \label{sec.Conclusion}}

This paper studies the mean square consensusability problem of MASs over Markovian packet loss channels. Necessary and sufficient consensus conditions are derived under various situations. The derived results show how the agent dynamics, the network topology and the channel loss interplay with each other to allow the existence of a linear distributed consensus controller. Analytic consensus conditions are only provided for consensus with identical Markovian packet losses. The case with nonidentical Markovian packet losses deserves more effort.

\bibliographystyle{ieeetr}

 \bibliography{/Users/xuliang/Dropbox/xuliang-research/4-references/bibtex-refs_bt/references}

\begin{thebibliography}{10}

\bibitem{MaCuiqin2010TAC}
C.~Ma and J.~Zhang, ``Necessary and sufficient conditions for consensusability
  of linear multi-agent systems,'' {\em IEEE Transactions on Automatic
  Control}, vol.~55, no.~5, pp.~1263--1268, 2010.

\bibitem{YouKeyou2011TACconsensusability}
K.~You and L.~Xie, ``Network topology and communication data rate for
  consensusability of discrete-time multi-agent systems,'' {\em IEEE
  Transactions on Automatic Control}, vol.~56, no.~10, pp.~2262--75, 2011.

\bibitem{GuGuoxiang2012TAC}
G.~Gu, L.~Marinovici, and F.~L. Lewis, ``Consensusability of discrete-time
  dynamic multiagent systems,'' {\em IEEE Transactions on Automatic Control},
  vol.~57, no.~8, pp.~2085--2089, 2012.

\bibitem{LiZhongkui2010TCS}
Z.~Li, Z.~Duan, G.~Chen, and L.~Huang, ``Consensus of multiagent systems and
  synchronization of complex networks: A unified viewpoint,'' {\em IEEE
  Transactions on Circuits and Systems I-Regular Papers}, vol.~57, no.~1,
  pp.~213--224, 2010.

\bibitem{TrentelmanHL2013TAC}
H.~L. Trentelman, K.~Takaba, and N.~Monshizadeh, ``Robust synchronization of
  uncertain linear multi-agent systems,'' {\em IEEE Transactions on Automatic
  Control}, vol.~58, no.~6, pp.~1511--1523, 2013.

\bibitem{LiuShuai2011SIAM}
S.~Liu, T.~Li, and L.~Xie, ``Distributed consensus for multiagent systems with
  communication delays and limited data rate,'' {\em SIAM Journal on Control
  and Optimization}, vol.~49, no.~6, pp.~2239--2262, 2011.

\bibitem{QiuZhirong2017TAC}
Z.~Qiu, L.~Xie, and Y.~Hong, ``Data rate for distributed consensus of
  multi-agent systems with high-order oscillator dynamics,'' {\em IEEE
  Transactions on Automatic Control}, vol.~PP, no.~99, pp.~1--1, 2017.

\bibitem{LiZhongkui2017TAC}
Z.~Li and J.~Chen, ``Robust consensus of linear feedback protocols over
  uncertain network graphs,'' {\em IEEE Transactions on Automatic Control},
  vol.~62, no.~8, pp.~4251--4258, 2017.

\bibitem{XuLiang2016Automatica}
L.~Xu, N.~Xiao, and L.~Xie, ``Consensusability of discrete-time linear
  multi-agent systems over analog fading networks,'' {\em Automatica}, vol.~71,
  pp.~292--299, 2016.

\bibitem{QiTian2016TAC}
T.~Qi, L.~Qiu, and J.~Chen, ``{MAS} consensus and delay limits under delayed
  output feedback,'' {\em IEEE Transactions on Automatic Control}, vol.~PP,
  no.~99, pp.~1--1, 2016.

\bibitem{WangZhenhua2017ScienceChina}
Z.~Wang, H.~Zhang, M.~Fu, and H.~Zhang, ``Consensus for high-order multi-agent
  systems with communication delay,'' {\em Science China Information Sciences},
  vol.~60, no.~9, p.~092204, 2017.

\bibitem{SinopoliBruno2004TAC}
B.~Sinopoli, L.~Schenato, M.~Franceschetti, K.~Poolla, M.~I. Jordan, and S.~S.
  Sastry, ``{K}alman filtering with intermittent observations,'' {\em IEEE
  Transactions on Automatic Control}, vol.~49, no.~9, pp.~1453--1464, 2004.

\bibitem{MoYilin2012TAC}
Y.~Mo and B.~Sinopoli, ``Kalman filtering with intermittent observations: Tail
  distribution and critical value,'' {\em IEEE Transactions on Automatic
  Control}, vol.~57, no.~3, pp.~677--689, 2012.

\bibitem{YouKeyou2011TACmarkovian}
K.~You and L.~Xie, ``Minimum data rate for mean square stabilizability of
  linear systems with {M}arkovian packet losses,'' {\em IEEE Transactions on
  Automatic Control}, vol.~56, no.~4, pp.~772--85, 2011.

\bibitem{GoldsmithA2005Book}
A.~Goldsmith, {\em Wireless communications}.
\newblock Cambridge: Cambridge University Press, 2005.

\bibitem{Huang2007Automatica}
M.~Huang and S.~Dey, ``Stability of {K}alman filtering with {M}arkovian packet
  losses,'' {\em Automatica}, vol.~43, no.~4, pp.~598--607, 2007.

\bibitem{XuLiangNecSys2018}
L.~Xu, Y.~Mo, and L.~Xie, ``Distributed consensus over markovian packet loss
  channels,'' in {\em submitted to the 7th IFAC Workshop on Distributed
  Estimation and Control in Networked Systems}, (Groningen, the Netherlands),
  2018.

\bibitem{XiaoNan2012TAC}
N.~Xiao, L.~Xie, and L.~Qiu, ``Feedback stabilization of discrete-time
  networked systems over fading channels,'' {\em IEEE Transactions on Automatic
  Control}, vol.~57, no.~9, pp.~2176--2189, 2012.

\bibitem{CostaO2005Book}
O.~L. d.~V. Costa, M.~D. Fragoso, and R.~P. Marques, {\em Discrete-time
  {M}arkov jump linear systems}.
\newblock Probability and its applications, London : Springer, c2005., 2005.

\bibitem{TokerOnur1995ACC}
O.~Toker and H.~Ozbay, ``On the {NP}-hardness of solving bilinear matrix
  inequalities and simultaneous stabilization with static output feedback,'' in
  {\em Proceedings of the 1995 American Control Conference}, vol.~4,
  pp.~2525--2526, IEEE, 1995.

\bibitem{SchenatoL2007ProcIEEE}
L.~Schenato, B.~Sinopoli, M.~Franceschetti, K.~Poolla, and S.~S. Sastry,
  ``Foundations of control and estimation over lossy networks,'' {\em
  Proceedings of the IEEE}, vol.~95, no.~1, pp.~163--187, 2007.

\bibitem{MoYilin2008CDC}
Y.~Mo and B.~Sinopoli, ``A characterization of the critical value for {Kalman}
  filtering with intermittent observations,'' in {\em Proceedings of the 47th
  IEEE Conference on Decision and Control}, (Cancun, Mexico), pp.~2692--2697,
  2008.

\bibitem{XuLiang2017TCNS}
L.~Xu, L.~Xie, and N.~Xiao, ``Mean square stabilization over gaussian
  finite-state markov channels,'' {\em IEEE Transactions on Control of Network
  Systems}, pp.~1--1, 2017.

\bibitem{ZelazoD2011TAC}
D.~Zelazo and M.~Mesbahi, ``Edge agreement: Graph-theoretic performance bounds
  and passivity analysis,'' {\em IEEE Transactions on Automatic Control},
  vol.~56, no.~3, pp.~544--555, 2011.

\bibitem{DeyS2009Automatica}
S.~Dey, A.~S. Leong, and J.~S. Evans, ``Kalman filtering with faded
  measurements,'' {\em Automatica}, vol.~45, no.~10, pp.~2223--2233, 2009.

\bibitem{XuLiang2018AutomaticaAccepted}
L.~Xu, J.~Zheng, N.~Xiao, and L.~Xie, ``Mean square consensus of multi-agent
  systems over fading networks with directed graphs,'' {\em Accepted by
  Automatica}, 2018.

\end{thebibliography}

\end{document}